\newtheoremstyle%
{custom}%
{}
{}
{}
{}
{}
{.}
{ }
{\thmname{}
\thmnumber{}%
\thmnote{\bfseries #3}}%
\newtheoremstyle%
{Theorem}%
{}%
{}%
{\itshape}%
{}%
{}%
{.}%
{ }%
{\thmname{\bfseries #1}%
\thmnumber{\;\bfseries #2}%
\thmnote{\;(\bfseries #3)}}%
\theoremstyle{Theorem}
\newtheorem{theorem}{Theorem}[section]
\newtheorem{lemma}{Lemma}[section]
\newtheorem{prop}{Proposition}[section]
\theoremstyle{definition}
\newtheorem{definition}{Definition}[section]
\theoremstyle{remark}
\theoremstyle{custom}
\title{Enumerating Diagonalizable Matrices over $\mathbb{Z}_{p^k}$}
\author{Catherine Falvey, Heewon Hah, William Sheppard, Brian Sittinger,\\ Rico Vicente}
\date{\vspace{-5ex}}
\begin{document}
\maketitle

\begin{abstract}
Although a good portion of elementary linear algebra concerns itself with matrices over a field such as $\mathbb{R}$ or $\mathbb{C}$, many combinatorial problems naturally surface when we instead work with matrices over a finite field. As some recent work has been done in these areas, we turn our attention to the problem of enumerating  the square matrices with entries in $\mathbb{Z}_{p^k}$ that are diagonalizable over $\mathbb{Z}_{p^k}$. This turns out to be significantly more nontrivial than its finite field counterpart  due to the presence of zero divisors in $\mathbb{Z}_{p^k}$.
\end{abstract}

\section{Introduction}

A classic problem in linear algebra concerns whether a matrix $A \in M_n(K)$ (where $K$ is a field) is diagonalizable: There exists an invertible matrix $P \in GL_n(K)$ and a diagonal matrix $D \in M_n(K)$ such that $A = PDP^{-1}$. It is known that if $A$ is diagonalizable, then $D$ is unique up to the order of its diagonal elements. Besides being useful for computing functions of matrices (and therefore often giving a solution to a system of linear differential equations), this problem has applications in the representation of quadratic forms.

\vspace{.1 in}

If we consider $M_n(K)$ when $K$ is a finite field, one natural problem is to enumerate $\text{Eig}_n(K)$, the set of $n \times n$ matrices over $K$ whose $n$ eigenvalues, counting multiplicity, are in $K$. Olsavsky \cite{Olsavsky} initiated this line of inquiry, and determined that for any prime $p$,
$$|\text{Eig}_2(\mathbb{F}_p)| = \frac{1}{2} \Big(p^4 + 2p^3 - p^2\Big).$$
\noindent More recently, Kaylor and Offner \cite{Kaylor} gave a procedure to enumerate $\text{Eig}_n(\mathbb{F}_q)$, thereby extending Olsavsky's work for any $n$ and any finite field $\mathbb{F}_q$.

\vspace{.1 in}

Inspired by these works, we turn our attention to $n \times n$ matrices over $\mathbb{Z}_{p^k}$, where $p$ is a prime and $k$ is a positive integer. More specifically, we investigate the problem about enumerating $\text{Diag}_n(\mathbb{Z}_{p^k})$, the set of $n \times n$ diagonalizable matrices over $\mathbb{Z}_{p^k}$. This is significantly more involved when $k \geq 2$, and many of the difficulties arise from having to carefully consider the zero divisors of $\mathbb{Z}_{p^k}$, namely any integral multiple of $p$.

\vspace{.1 in}

In Section 2, we review the pertinent definitions and notations for working with matrices over commutative rings. Most notably, we give a crucial theorem that essentially states that a diagonalizable matrix over $\mathbb{Z}_{p^k}$ is unique up to the ordering of its diagonal entries. In Section 3, we give the basic procedure for enumerating $\text{Diag}_n(\mathbb{Z}_{p^k})$ and apply it to the case where $n=2$ in Section 4.
In order to deal with the cases where $n \geq 3$ in a systematic manner, we introduce to any diagonal matrix an associated weighted graph in Section 5 that allows us to find $|\text{Diag}_3(\mathbb{Z}_{p^k})|$ and $|\text{Diag}_4(\mathbb{Z}_{p^k})|$ in Sections 6 and 7, respectively. In the final sections, we use our work to find the proportion of matrices that are diagonalizable over $\mathbb{Z}_{p^k}$ and conclude by giving ideas for future research based on the ideas in this article. As far as we understand, all results and definitions from Proposition 3.1 in Section 3 onward are original.

\section{Background}

In this section, we give some definitions from matrix theory over rings that allow us to extend some notions of matrices from elementary linear algebra to those having entries in $\mathbb{Z}_{p^k}$. For the following definitions, we let $R$ denote a commutative ring with unity. For further details, we refer the interested reader to \cite{Brown}.

To fix some notation, let $M_n(R)$ denote the set of $n \times n$ matrices with entries in $R$. The classic definitions of matrix addition and multiplication as well as determinants generalize in $M_n(R)$ in the expected manner. In general, $M_n(R)$ forms a non-commutative ring with unity $I_n$, the matrix with 1s on its main diagonal and 0s elsewhere.

Next, we let $GL_n(R)$ denote the set of invertible matrices in $M_n(R)$; that is,
$$GL_n(R) = \{A \in M_n(R) \, : \, AB = BA = I_n \text{ for some } B \in M_n(R)\}.$$

\noindent Note that $GL_n(R)$ forms a group under matrix multiplication and has alternate characterization
$$GL_n(R) = \{A \in M_n(R) \, : \, \det A \in R^*\},$$
\noindent where $R^*$ denotes the group of units in $R$. Observe that when $R$ is a field $K$, we have $K^* = K \backslash \{0\}$; thus we retrieve the classic fact for invertible matrices over $K$. For this article, we are specifically interested in the case when $R = \mathbb{Z}_{p^k}$ where $p$ is prime and $k \in \mathbb{N}$. Then, $$GL_n(\mathbb{Z}_{p^k}) = \{A \in M_n(\mathbb{Z}_{p^k}) \, | \, \det A \not\equiv 0 \bmod p\};$$ 
\noindent in other words, we can think of an invertible matrix with entries in $\mathbb{Z}_{p^k}$ as having a determinant not divisible by $p$.

\begin{definition}
We say that $A \in M_n(R)$ is \textbf{diagonalizable over $R$} if $A$ is similar to a diagonal matrix $D \in M_n(R)$; that is, $A=PDP^{-1}$ for some $P \in GL_n(R)$.
\end{definition}

Recall that any diagonalizable matrix over a field is similar to a distinct diagonal matrix that is unique up to ordering of its diagonal entries. Since $\mathbb{Z}_{p^k}$ is \emph{not} a field whenever $k \geq 2$, we now give a generalization of this key result to matrices over $\mathbb{Z}_{p^k}$. This provides a foundational result that allows us to use the methods from \cite{Kaylor} to enumerate diagonalizable matrices over $\mathbb{Z}_{p^k}$. Although we originally came up for a proof for this result, the following elegant proof was suggested to the authors by an anonymous MathOverflow user; see \cite{User}.

\begin{theorem} \label{thm:DDT} Any diagonalizable matrix over $\mathbb{Z}_{p^k}$ is similar to exactly one diagonal matrix that is unique up to ordering of its diagonal entries.
\end{theorem}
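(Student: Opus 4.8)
The plan is to reduce the uniqueness statement to a counting/orbit argument about eigenvalues, by identifying the multiset of diagonal entries of $D$ as an intrinsic invariant of the similarity class of $A$. Suppose $A = PDP^{-1} = Q D' Q^{-1}$ with $D = \operatorname{diag}(d_1,\dots,d_n)$ and $D' = \operatorname{diag}(d_1',\dots,d_n')$; I want to show that the multisets $\{d_1,\dots,d_n\}$ and $\{d_1',\dots,d_n'\}$ coincide. Since $\mathbb{Z}_{p^k}$ is not a field, the naive approach via the characteristic polynomial fails: the factorization $\prod_i (x - d_i)$ need not be unique in $\mathbb{Z}_{p^k}[x]$ (e.g. $x^2 = (x-p)(x+p) \bmod p^2$). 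So the characteristic polynomial alone is insufficient, and this is precisely where the zero divisors make trouble.

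The key idea I would use is to evaluate scalars at $A$ and count. For each $\lambda \in \mathbb{Z}_{p^k}$, consider the matrix $A - \lambda I_n$; under conjugation this becomes $P(D - \lambda I_n)P^{-1}$, so $\det(A - \lambda I_n) = \prod_{i=1}^n (d_i - \lambda)$, and more usefully the $\mathbb{Z}_{p^k}$-module structure of $\ker(A - \lambda I_n)$ (equivalently the cokernel, or the Smith normal form of $A - \lambda I_n$) is a similarity invariant. For a diagonal matrix $D$, the elementary divisors of $D - \lambda I_n$ are exactly the $d_i - \lambda$, so the multiset $\{v_p(d_i - \lambda) : i\}$ (capped at $k$) is recovered from $A$. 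Running this over a suitable family of $\lambda$ should pin down the multiset $\{d_i\}$: two diagonal entries $d_i$ and $d_j$ are equal iff $v_p(d_i - d_j) \ge k$, and the valuation data $\{v_p(d_i - \lambda)\}$ as $\lambda$ ranges over $\mathbb{Z}_{p^k}$ (in particular over $\lambda$ congruent to each $d_i$ to higher and higher powers of $p$) determines each $d_i$ exactly. Concretely, one shows by induction on $k$ (lifting mod $p$, then mod $p^2$, etc.) that the number of indices $i$ with $d_i \equiv c \pmod{p^m}$ is determined by $A$ for every $c$ and every $m \le k$, which forces the multisets to agree.

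An alternative, cleaner route — likely the one the anonymous MathOverflow user had in mind — is to work one prime-power congruence at a time and use that $\mathbb{Z}_{p^k} \to \mathbb{Z}_{p}$ is a surjection with nilpotent kernel. Reducing $A = PDP^{-1}$ mod $p$ gives diagonalizability over the field $\mathbb{F}_p$, where uniqueness is classical, so the multiset $\{d_i \bmod p\}$ is determined. Then one localizes: for each residue $\bar a \in \mathbb{F}_p$, the generalized eigenspace decomposition over $\mathbb{Z}_{p^k}$ (splitting by the idempotents coming from the distinct-residue factors of the characteristic polynomial, which lift by Hensel/CRT since the factors are coprime mod $p$) lets us restrict to the block where every $d_i \equiv \bar a$. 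On that block $A - \bar a I$ has all eigenvalues divisible by $p$, and one recurses on $k$. Combined with the observation that a diagonalizable matrix all of whose eigenvalues lie in $pR$ is $p$ times a diagonalizable matrix over $\mathbb{Z}_{p^{k-1}}$ (after the appropriate rescaling argument), an induction on $k$ closes the proof.

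The main obstacle I anticipate is the block-decomposition step: one must be sure that the eigenspace/idempotent splitting is legitimate over $\mathbb{Z}_{p^k}$ and that it is compatible with both diagonalizations simultaneously, so that comparing $D$ and $D'$ reduces faithfully to comparing their blocks. This requires care because the decomposition is canonical (coming from the ring structure of $\mathbb{Z}_{p^k}[A]$) but the conjugating matrices $P, Q$ are not, so the bookkeeping — showing that within a single $\bar a$-block the rescaled matrix really is diagonalizable over $\mathbb{Z}_{p^{k-1}}$ with the rescaled entries — is where the real work lies. Everything else is either classical field theory (the $k=1$ base case) or routine manipulation of conjugation and determinants.
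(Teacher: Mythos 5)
Your first strategy is correct but genuinely different from the paper's argument. The paper proves uniqueness by a direct, elementary manipulation: writing $PD = D'P$ entrywise gives $p_{ij}d_i = p_{ij}d'_j$, and since $\det P$ is a unit while the non-units of $\mathbb{Z}_{p^k}$ form an ideal, some generalized diagonal $\prod_i p_{i,\sigma(i)}$ must consist entirely of units; cancelling these units yields $d_i = d'_{\sigma(i)}$, so $\sigma$ itself is the required permutation. Your route instead extracts the multiset of diagonal entries as an intrinsic invariant of the similarity class: for each $\lambda$, the isomorphism class of $\ker(A-\lambda I)$ (equivalently the cokernel, or the capped valuations $\min(v_p(d_i-\lambda),k)$ from the Smith normal form) is preserved by conjugation, and reading off the count of full-valuation entries at $\lambda = c$ directly gives the multiplicity of $c$ as a diagonal entry --- your induction over $m \le k$ is not even needed. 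This works, and it buys a conceptual explanation (the diagonal entries are module-theoretic invariants, not artifacts of the conjugating matrix), at the cost of importing Smith normal form and Krull--Schmidt over $\mathbb{Z}_{p^k}$; note that you must use the cokernel module rather than the determinantal divisors, since the latter can vanish and fail to separate, e.g., $\mathrm{diag}(p^{k-1},p)$ from $\mathrm{diag}(p^{k-2},p^2)$. Your second sketched route (idempotent lifting and induction on $k$) is the shakier of the two: as you concede, the step ``a diagonalizable matrix with all eigenvalues in $pR$ is $p$ times a diagonalizable matrix over $\mathbb{Z}_{p^{k-1}}$'' is not a well-defined operation on $\mathbb{Z}_{p^k}$ as stated, and the block bookkeeping you flag is real work; I would commit to the first route, which needs only standard facts to complete.
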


\begin{proof}
Suppose that $D, D' \in M_n(\mathbb{Z}_{p^k})$ are diagonal matrices such that $D' = PDP^{-1}$ for some $P \in GL_n(\mathbb{Z}_{p^k})$. Writing $D = \text{diag}(d_1, \dots , d_n)$, $D' = \text{diag}(d'_1, \dots , d'_n)$, and $P = (p_{ij})$, we see that $D' = PDP^{-1}$ rewritten as $PD = D' P$ yields $p_{ij} d_i = p_{ij} d'_j$ for all $i, j$.

\vspace{.1 in}

Since $P \in GL_n(\mathbb{Z}_{p^k})$, we know that $\det{P} \in \mathbb{Z}_{p^k}^*$, and thus $\det{P} \not\equiv 0 \bmod p$. However, since $\det{P} = \sum_{\sigma \in S_n} (-1)^{\text{sgn}(\sigma)} \prod_{i} p_{i, \sigma(i)}$, and the set of non-units in $\mathbb{Z}_{p^k}$ (which is precisely the subset of elements congruent to 0 mod $p$) is additively closed, there exists $\sigma \in S_n$ such that $\prod_{i} p_{i, \sigma(i)} \in \mathbb{Z}_{p^k}^*$ and thus $p_{i,\sigma(i)} \in \mathbb{Z}_{p^k}^*$ for all $i$.

\vspace{.1 in}

Then for this choice of $\sigma$, it follows that $p_{i,\sigma(i)} d_i = p_{i,\sigma(i)} d'_{\sigma(i)}$ for each $i$, and since $p_{i,\sigma(i)} \in \mathbb{Z}_{p^k}^*$, we deduce that $d_i = d'_{\sigma(i)}$ for each $i$. In other words, $\sigma$ is a permutation of the diagonal entries of $D$ and $D'$, giving us the desired result. 
\end{proof}

\vspace{.1 in}

\noindent \textbf{Remark:} Theorem \ref{thm:DDT} does not extend to $\mathbb{Z}_m$ for a modulus $m$ with more than one prime factor. As an example from \cite{Brown}, the matrix
$\begin{pmatrix}
    2 & 3 \\
    4 & 3
\end{pmatrix} \in M_2(\mathbb{Z}_6)$ has two distinct diagonalizations
$$\begin{pmatrix}
    1 & 3 \\
    2 & 1
\end{pmatrix}
\begin{pmatrix}
    2 & 0 \\
    0 & 3
\end{pmatrix}
\begin{pmatrix}
    1 & 3 \\
    2 & 1
\end{pmatrix}^{-1} =
\begin{pmatrix}
    1 & 3 \\
    5 & 2
\end{pmatrix}
\begin{pmatrix}
    5 & 0 \\
    0 & 0
\end{pmatrix}
\begin{pmatrix}
    1 & 3 \\
    5 & 2
\end{pmatrix}^{-1}.$$
The resulting diagonal matrices are thus similar over $\mathbb{Z}_6$ although their diagonal entries are not rearrangements of one another.

\section{How to determine \texorpdfstring{$|\text{Diag}_n(\mathbb{Z}_{p^k})|$}{TEXT}}

In this section, we give a procedure that allows us to determine $|\text{Diag}_n(\mathbb{Z}_{p^k})|$, the number of matrices in $M_n(\mathbb{Z}_{p^k})$ that are diagonalizable over $\mathbb{Z}_{p^k}$.
The main idea is to use a generalization of a lemma from Kaylor (Lemma 3.1 in \cite{Kaylor}). Before stating it, we first fix some notation in the following definition.

\begin{definition}

Let $R$ be a commutative ring with 1, and fix $A \in M_n(R)$.

\begin{itemize}

\item The \textbf{similarity (conjugacy) class} of $A$, denoted by $S(A)$, is the set of matrices similar to $A$:
$$S(A) = \{B\in M_n(R) \, : \, B=PAP^{-1}
\text{ for some } P \in GL_n(R)\}.$$

\item The \textbf{centralizer} of $A$, denoted by $C(A)$, is the set of invertible matrices that commute with $A$:
$$C(A) = \lbrace P \in GL_n(R) \, : \, PA=AP \rbrace.$$

\end{itemize}

\end{definition}

\noindent Note that $P \in C(A)$ if and only if $A=PAP^{-1}$, and moreover $C(A)$ is a subgroup of $GL_n(R)$.

\begin{lemma} \label{lemma:counting} Let $R$ be a finite commutative ring. For any $A \in M_n(R)$, we have $\displaystyle \vert S(A)\vert = \frac{\vert GL_n(R)\vert }{\vert C(A)\vert}.$ 
\end{lemma}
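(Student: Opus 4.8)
The plan is to recognize this as a standard orbit–stabilizer counting argument, specialized to the action of $GL_n(R)$ on $M_n(R)$ by conjugation. First I would set up the group action: the group $G = GL_n(R)$ acts on the set $M_n(R)$ by $P \cdot B = PBP^{-1}$, which is a genuine left action since $I_n \cdot B = B$ and $(PQ)\cdot B = P(Q \cdot B)P^{-1}$... wait, one must check $(PQ) \cdot B = (PQ)B(PQ)^{-1} = PQBQ^{-1}P^{-1} = P\cdot(Q\cdot B)$, so it is indeed a left action. Under this action, the orbit of $A$ is exactly the similarity class $S(A)$, and the stabilizer of $A$ is $\{P \in GL_n(R) : PAP^{-1} = A\} = \{P \in GL_n(R) : PA = AP\} = C(A)$, the centralizer. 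The hypothesis that $R$ is finite guarantees that $GL_n(R)$ is a finite group (since $M_n(R)$ is finite), so all the cardinalities in sight are finite and the orbit–stabilizer theorem applies without any subtlety.

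Next I would invoke the orbit–stabilizer theorem: for a finite group $G$ acting on a set $X$ and any $x \in X$, the map $gC(x) \mapsto g\cdot x$ is a well-defined bijection between the left cosets $G/\mathrm{Stab}(x)$ and the orbit $\mathrm{Orb}(x)$, whence $|\mathrm{Orb}(x)| = [G : \mathrm{Stab}(x)] = |G|/|\mathrm{Stab}(x)|$. Applying this with $G = GL_n(R)$ and $x = A$ gives immediately
$$|S(A)| = \frac{|GL_n(R)|}{|C(A)|},$$
which is the claim. If I wanted the proof to be self-contained rather than citing orbit–stabilizer, I would spell out the bijection directly: define $\varphi : GL_n(R)/C(A) \to S(A)$ by $\varphi(PC(A)) = PAP^{-1}$; check it is well-defined (if $P' = PQ$ with $Q \in C(A)$ then $P'A(P')^{-1} = PQAQ^{-1}P^{-1} = PAP^{-1}$), surjective (by definition of $S(A)$), and injective ($PAP^{-1} = P'A(P')^{-1}$ forces $(P^{-1}P')A = A(P^{-1}P')$, so $P^{-1}P' \in C(A)$ and the cosets agree). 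Then $|S(A)| = |GL_n(R)/C(A)| = |GL_n(R)|/|C(A)|$ by Lagrange's theorem, using that $C(A)$ is a subgroup of $GL_n(R)$ as already noted in the excerpt.

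Honestly, there is no real obstacle here: the lemma is the orbit–stabilizer theorem in disguise, and the only things to verify are the bookkeeping facts that conjugation is a group action, that its orbits and stabilizers are precisely $S(A)$ and $C(A)$, and that finiteness of $R$ makes everything finite so that division is legitimate. The mildest point requiring a word of care is well-definedness of the coset-to-conjugate map, i.e.\ that replacing $P$ by $PQ$ with $Q$ in the centralizer does not change $PAP^{-1}$; but this is a one-line computation. I would therefore keep the write-up short, either citing orbit–stabilizer outright or giving the explicit bijection $\varphi$ above, and note that the hypothesis ``$R$ finite'' is used solely to ensure $|GL_n(R)|$ and $|C(A)|$ are finite positive integers.
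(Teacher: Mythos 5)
Your proposal is correct and matches the paper's own justification, which simply cites the Orbit--Stabilizer Theorem for the conjugation action of $GL_n(R)$ on $M_n(R)$ (and defers the details to Kaylor's Lemma 3.1). Your explicit verification of the action, the identification of orbit with $S(A)$ and stabilizer with $C(A)$, and the coset bijection are exactly the bookkeeping the paper leaves implicit.
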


\begin{proof}
This is proved verbatim as Lemma 3.1 in \cite{Kaylor} upon replacing a finite field with a finite commutative ring. Alternatively, this is a direct consequence of the Orbit-Stabilizer Theorem where $GL_n(R)$ is acting on $M_n(R)$ via conjugation.
\end{proof}


To see how this helps us in $M_n(\mathbb{Z}_{p^k})$, recall by Theorem \ref{thm:DDT} that the similarity class of a given diagonalizable matrix can be represented by a unique diagonal matrix (up to ordering of diagonal entries). Therefore, we can enumerate $\text{Diag}_n(\mathbb{Z}_{p^k})$ by first enumerating the diagonal matrices in $M_n(\mathbb{Z}_{p^k})$ and then counting how many matrices in $M_n(\mathbb{Z}_{p^k})$ are similar to a given diagonal matrix. Then, Lemma \ref{lemma:counting} yields
\begin{equation}\label{eq:1}
|\text{Diag}_n(\mathbb{Z}_{p^k})| = \sum_{D \in M_n(\mathbb{Z}_{p^k})} |S(D)| = \sum_{D \in M_n(\mathbb{Z}_{p^k})}
\frac{\vert GL_n(\mathbb{Z}_{p^k})\vert }{\vert C(D)\vert},
\end{equation}
where it is understood that each diagonal matrix $D$ represents a distinct similarity class of diagonal matrices. Observe that diagonal matrices having the same diagonal entries up to order belong to the same similarity class and are counted as different matrices when computing the size of their similarity class.

First, we give a formula for $\vert GL_n(\mathbb{Z}_{p^k}) \vert$. As this seems to be surprisingly not well-known, we state and give a self-contained proof of this result inspired by \cite{Bollman} (for a generalization, see \cite{Han}).

\begin{lemma}
$\vert GL_n(\mathbb{Z}_{p^k})\vert = p^{n^2(k-1)} \displaystyle \prod_{l=1}^{n} (p^n - p^{l-1}).$
\end{lemma}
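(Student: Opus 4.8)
The plan is to count invertible matrices over $\mathbb{Z}_{p^k}$ by reducing modulo $p$ and using the ring homomorphism $\pi \colon M_n(\mathbb{Z}_{p^k}) \to M_n(\mathbb{F}_p)$ induced by the reduction map $\mathbb{Z}_{p^k} \to \mathbb{Z}_{p^k}/(p) \cong \mathbb{F}_p$. The key observation, already noted in the excerpt, is that $A \in GL_n(\mathbb{Z}_{p^k})$ if and only if $\det A \not\equiv 0 \bmod p$, which is exactly the condition that $\pi(A) \in GL_n(\mathbb{F}_p)$. Hence $GL_n(\mathbb{Z}_{p^k}) = \pi^{-1}\big(GL_n(\mathbb{F}_p)\big)$, and since $\pi$ is surjective, $|GL_n(\mathbb{Z}_{p^k})|$ equals $|GL_n(\mathbb{F}_p)|$ times the size of a single fiber of $\pi$.

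First I would establish that $\pi$ is surjective and that every fiber has the same cardinality: given $A \in M_n(\mathbb{Z}_{p^k})$, the fiber $\pi^{-1}(\pi(A))$ consists of all matrices $A + pB$ where $B$ ranges over $M_n(\mathbb{Z}_{p^{k-1}})$ (more precisely, the entries of $pB$ range over the ideal $p\mathbb{Z}_{p^k}$, which has $p^{k-1}$ elements). Since there are $n^2$ independent entries and each can be adjusted by any of the $p^{k-1}$ multiples of $p$, each fiber has exactly $(p^{k-1})^{n^2} = p^{n^2(k-1)}$ elements. This accounts for the factor $p^{n^2(k-1)}$ in the formula.

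Next I would recall (or reprove) the classical count $|GL_n(\mathbb{F}_p)| = \prod_{l=1}^{n}(p^n - p^{l-1})$: an invertible matrix over $\mathbb{F}_p$ is determined by choosing its rows (or columns) to be linearly independent, so the first row can be any nonzero vector ($p^n - 1$ choices), the second any vector outside the span of the first ($p^n - p$ choices), and in general the $l$-th row any vector outside an $(l-1)$-dimensional subspace ($p^n - p^{l-1}$ choices). Multiplying the fiber size by this count yields
\begin{equation*}
|GL_n(\mathbb{Z}_{p^k})| = p^{n^2(k-1)} \prod_{l=1}^{n}(p^n - p^{l-1}),
\end{equation*}
as claimed. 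I expect the main obstacle to be purely expository rather than mathematical: one must be careful to verify that $\pi$ is genuinely a ring homomorphism (so that $\pi(\det A) = \det(\pi(A))$, making the unit criterion transfer cleanly) and that the fiber-counting argument is airtight — in particular that adding an arbitrary element of $p\mathbb{Z}_{p^k}$ to each entry gives a bijection between a fiber and $(p\mathbb{Z}_{p^k})^{n^2}$. Neither point is deep, but stating them precisely is where the self-contained proof earns its keep.
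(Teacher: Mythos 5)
Your proposal is correct and takes essentially the same route as the paper: reduce modulo $p$, count $|GL_n(\mathbb{F}_p)| = \prod_{l=1}^{n}(p^n - p^{l-1})$ by choosing linearly independent rows/columns, and multiply by the fiber size $p^{n^2(k-1)}$ of the reduction map. The only cosmetic difference is that the paper packages the fiber count via the First Isomorphism Theorem applied to the reduction map restricted to the unit groups, whereas you count the fibers of the full matrix map lying over $GL_n(\mathbb{F}_p)$ directly; the two are interchangeable, and your phrasing via $GL_n(\mathbb{Z}_{p^k}) = \pi^{-1}\bigl(GL_n(\mathbb{F}_p)\bigr)$ makes the equal-fiber step explicit.
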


\begin{proof}
First, we compute $|GL_n(\mathbb{Z}_p)|$ by enumerating the possible columns of its matrices. For $A \in GL_n(\mathbb{Z}_p)$, there are $p^n - 1$ choices for the first column of $A$, as the zero column vector is never linearly independent. Next, we fix $l \in \{2, 3, \dots, n\}$. After having chosen the first $(l-1)$ columns, there are $(p^n - 1) - (p^{l-1} - 1) = p^n - p^{l-1}$ choices for the $l$-th column, because we want these $l$ columns to be linearly independent over $\mathbb{Z}_p$ (and there are $p$ multiples for each of the first $(l-1)$ columns). Therefore, we conclude that
$$\vert GL_n(\mathbb{Z}_{p})\vert = \displaystyle \prod_{l=1}^{n} (p^n - p^{l-1}).$$

Hereafter, we assume that $k \geq 2$. Consider the mapping $\psi : M_n(\mathbb{Z}_{p^k}) \rightarrow M_n(\mathbb{Z}_{p})$ defined by $\psi(A) = A\bmod p $; note that $\psi$ is a well-defined (due to $p \mid p^k$) surjective ring homomorphism. Moreover, since ker$\;\psi = \{A \in M_n(\mathbb{Z}_{p^k}) \, : \, \psi(A) = 0\bmod p\}$ (so that every entry in such a matrix is divisible by $p$), we deduce that $|\text{ker}\;\psi| = (p^k / p)^{n^2} = p^{(k-1)n^2}$.

\vspace{.1 in}

Then, restricting $\psi$ to the respective groups of invertible matrices, the First Isomorphism Theorem yields
$${GL_n(\mathbb{Z}_{p^k})} / {\ker\;\psi} \cong\; GL_n(\mathbb{Z}_p).$$

\noindent Therefore, we conclude that
$$\vert GL_n(\mathbb{Z}_{p^k})\vert = |\ker\psi| \cdot |GL_n(\mathbb{Z}_{p})| = p^{n^2(k-1)} \displaystyle \prod_{l=1}^{n} (p^n - p^{l-1}).$$
\end{proof}

We next turn our attention to the problem of enumerating the centralizer of a diagonal matrix in $\mathbb{Z}_{p^k}$. 

\begin{prop}\label{thm:centralizer}
Let $D \in M_n(\mathbb{Z}_{p^k})$ be a diagonal matrix whose distinct diagonal entries $\lambda_1, \dots, \lambda_g$ have multiplicities $m_1, \dots, m_g$, respectively. Then,
$$|C(D)| = \Big(\prod_{i = 1}^g |GL_{m_i}(\mathbb{Z}_{p^k})|\Big) \cdot \Big( \prod_{j = 2}^g \prod_{i = 1}^{j-1}  p^{2m_im_jl_{ij}}\Big),$$
where $l_{ij}$ is the non-negative integer satisfying $p^{l_{ij}} \mid\mid (\lambda_i - \lambda_j)$ for each $i$ and $j$; that is, 
$$\lambda_i - \lambda_j = rp^{l_{ij}} \text{ for some } r \in \mathbb{Z}_{p^{k-l_{ij}}}^*.$$
\end{prop}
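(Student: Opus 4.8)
The plan is to describe the matrices $P = (p_{ij}) \in M_n(\mathbb{Z}_{p^k})$ commuting with $D$ explicitly, and then count which of these lie in $GL_n(\mathbb{Z}_{p^k})$. Writing $PD = DP$ entrywise gives $p_{ij} d_j = d_i p_{ij}$, i.e. $p_{ij}(d_i - d_j) = 0$ in $\mathbb{Z}_{p^k}$. If the $i$-th and $j$-th diagonal slots carry the same eigenvalue, this is no constraint; if they carry distinct eigenvalues $\lambda_a \neq \lambda_b$ with $p^{l_{ab}} \mid\mid (\lambda_a - \lambda_b)$, then $p_{ij}(\lambda_a - \lambda_b) = 0$ forces $p^{k - l_{ab}} \mid p_{ij}$, so $p_{ij}$ ranges over the ideal $p^{k-l_{ab}}\mathbb{Z}_{p^k}$, which has exactly $p^{l_{ab}}$ elements. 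First I would group the indices $1,\dots,n$ into the $g$ blocks according to which eigenvalue sits in each position (so block $i$ has size $m_i$), after which $P$ acquires a block structure: the diagonal block $P_{ii}$ is an arbitrary $m_i \times m_i$ matrix over $\mathbb{Z}_{p^k}$, while each off-diagonal block $P_{ij}$ (for $i \neq j$) has all $m_i m_j$ of its entries forced into $p^{k - l_{ij}}\mathbb{Z}_{p^k}$, contributing a factor of $(p^{l_{ij}})^{m_i m_j}$; pairing the blocks $(i,j)$ and $(j,i)$ accounts for the exponent $2 m_i m_j l_{ij}$ and explains the product $\prod_{j=2}^g \prod_{i=1}^{j-1} p^{2 m_i m_j l_{ij}}$.

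The remaining step is to determine when such a block matrix $P$ is invertible over $\mathbb{Z}_{p^k}$, which (as recalled in the Background section) is equivalent to $\det P \not\equiv 0 \bmod p$. Reducing mod $p$, every off-diagonal entry $p_{ij}$ with $i,j$ in different blocks lies in $p^{k-l_{ij}}\mathbb{Z}_{p^k}$ and hence vanishes mod $p$ precisely when $l_{ij} < k$ — but $l_{ij} < k$ always holds here, since $\lambda_i - \lambda_j \neq 0$ in $\mathbb{Z}_{p^k}$ means $p^k \nmid (\lambda_i - \lambda_j)$. Thus $P \bmod p$ is block-diagonal with blocks $P_{ii} \bmod p$, so $\det P \equiv \prod_i \det(P_{ii} \bmod p) \bmod p$, and this is a unit iff each $P_{ii} \bmod p$ is invertible over $\mathbb{Z}_p$, i.e. iff each $P_{ii} \in GL_{m_i}(\mathbb{Z}_{p^k})$. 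The choices for the diagonal blocks and the off-diagonal blocks are independent, so $|C(D)| = \big(\prod_{i=1}^g |GL_{m_i}(\mathbb{Z}_{p^k})|\big) \cdot \big(\prod_{j=2}^g \prod_{i=1}^{j-1} p^{2 m_i m_j l_{ij}}\big)$, as claimed.

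The one point needing genuine care is the invertibility criterion: a priori, a matrix can have singular diagonal blocks mod $p$ yet still be invertible because the off-diagonal blocks "help." The reason this cannot happen is exactly that all the off-diagonal blocks die mod $p$, which in turn rests on the observation that distinct eigenvalues of $D$ — being distinct elements of $\mathbb{Z}_{p^k}$ — satisfy $0 \le l_{ij} \le k-1$. I expect this is the main obstacle in the sense that it is the only place where one must invoke something beyond bookkeeping; everything else is a direct translation of the commutation relations into a counting problem over the ideals $p^{k-l_{ij}}\mathbb{Z}_{p^k}$.
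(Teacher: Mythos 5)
Your proposal is correct and follows essentially the same route as the paper: impose the block structure from the eigenvalue multiplicities, use $P_{ij}(\lambda_i-\lambda_j)=0$ to force the off-diagonal blocks into $p^{k-l_{ij}}\mathbb{Z}_{p^k}$, and reduce mod $p$ to see that invertibility of $P$ is equivalent to invertibility of each diagonal block. If anything you are slightly more explicit than the paper about why $l_{ij}\leq k-1$ makes the off-diagonal blocks vanish mod $p$ and why the determinant then factors, but these are details the paper's argument already implicitly contains.
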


\begin{proof}
Assume without loss of generality that all matching diagonal entries of $D$ are grouped together; that is, we can think of each $\lambda_i$ with multiplicity $m_i$ as having its own $m_i \times m_i$ diagonal block of the form $\lambda_i I_{m_i}$ within $D$.

\vspace{.1 in}

To find the centralizer of $D$, we need to account for all $A \in GL_n(\mathbb{Z}_{p^k})$ such that $AD = DA$. Writing $A = (A_{ij})$, where $A_{ij}$ is an $m_i \times m_j$ block, computing the necessary products and equating like entries yields $$\lambda_i A_{ij} = \lambda_j A_{ij}.$$

\noindent If $i \neq j$, then $(\lambda_i - \lambda_j) A_{ij} \equiv 0 \bmod p^k$. Therefore, $A_{ij} \equiv 0 \bmod p^{k - l_{ij}}$, and thus $A_{ij} \equiv 0 \bmod p$. Observe that this gives $p^{l_{ij}}$ possible values for each entry in $A_{ij}$ (and similarly for those in $A_{ji}$).

\vspace{.1 in}

Therefore, $A$ is congruent to a block diagonal matrix modulo $p$ with blocks $A_{ii}$ having dimensions $m_i \times m_i$ for each $i \in \{1, \dots, g\}$. Finally since $A \in GL_n(\mathbb{Z}_{p^k})$, this means that each $A_{ii} \in GL_{m_i}(\mathbb{Z}_{p^k})$. With this last observation, the formula for $|C(D)|$ now follows immediately.
\end{proof}

Proposition \ref{thm:centralizer} motivates the following classification of diagonal matrices in $\mathbb{Z}_{p^k}$.

\begin{definition}
Let $D \in M_n(\mathbb{Z}_{p^k})$ be a diagonal matrix whose distinct diagonal entries $\lambda_1, \dots, \lambda_g$ have multiplicities $m_1, \dots, m_g$, respectively. The \textbf{type} of $D$ is given by the following two quantities:
\begin{itemize}
\item The partition $n = m_1 + \dots + m_g$
\item The set $\{l_{ij}\}$ indexed over all $1 \leq i < j \leq g$, where $p^{l_{ij}} \mid\mid (\lambda_j - \lambda_i)$.
\end{itemize}

\noindent Then we say that two diagonal matrices $D, D' \in M_n(\mathbb{Z}_{p^k})$ have the \textbf{same type} if and only if $D$ and $D'$ share the same partition of $n$, and there exists a permutation $\sigma \in S_n$ such that
$l_{ij} = l'_{\sigma(i)\sigma(j)}$ for all $1 \leq i < j \leq g$. We denote the set of all distinct types of diagonal $n \times n$ matrices by $\mathcal{T}(n)$.
\end{definition}

\noindent \textbf{Example:} Consider the following three diagonal matrices from $M_3(\mathbb{Z}_8)$:
$$D_1 = \begin{pmatrix} 1 & 0 & 0\\ 0 & 2 & 0\\0 & 0 & 3\end{pmatrix},\, D_2 = \begin{pmatrix} 1 & 0 & 0\\ 0 & 1 & 0\\0 & 0 & 5\end{pmatrix}, \, D_3 = \begin{pmatrix} 1 & 0 & 0 \\ 0 & 1 & 0\\0 & 0 & 3 \end{pmatrix},\, D_4 = \begin{pmatrix} 7 & 0 & 0 \\ 0 & 5 & 0\\0 & 0 & 7 \end{pmatrix}.$$

\noindent Since $D_1$ has partition $1 + 1 + 1$, while $D_2$, $D_3$, and $D_4$ have the partition $2 + 1$, $D_1$ does not have the same type as any of $D_2$, $D_3$, and $D_4$. Moreover, $D_2$ and $D_3$ do not have the same type, because $2^2 \mid\mid(5 - 1)$, while $2^1 \mid\mid(3 - 1)$. However, $D_3$ and $D_4$ have the same type, because they share the same partition $2+1$ and $2^1$ exactly divides both $3-1$ and $7-5$.

\vspace{.1 in}

It is easy to verify that if $D$ and $D'$ are two $n \times n$ diagonal matrices of the same type, then $|C(D)| = |C(D')|$ and thus $|S(D)| = |S(D')|$. Consequently for any type $T$, define $c(T)$ and $s(T)$ by $c(T) = |C(D)|$ and $s(T) = |S(D)|$ where $D$ is any matrix of type $T$. Then, letting $t(T)$ denote the number of diagonal matrices (up to permutations of the diagonal entries) having type $T$, we can rewrite (\ref{eq:1}) as

\begin{equation} \label{eq:2}
|\text{Diag}_n(\mathbb{Z}_{p^k})| 
= \sum_{T \in \mathcal{T}(n)} t(T) \, \frac{\vert GL_n(\mathbb{Z}_{p^k})\vert }{c(T)}.
\end{equation}


\section{Enumerating the \texorpdfstring{$2 \times 2$}{TEXT} Diagonalizable Matrices}

We now illustrate our procedure for determining the value of $\vert \text{Diag}_2(\mathbb{Z}_{p^k}) \vert$. 

\begin{theorem}
The number of $2 \times 2$ matrices with entries in $\mathbb{Z}_{p^k}$ that are diagonalizable over $\mathbb{Z}_{p^k}$ is
$$\vert \emph{Diag}_2(\mathbb{Z}_{p^k}) \vert = p^k + \dfrac{p^{k+1}(p^2-1)(p^{3k}-1)}{2(p^3-1)}.$$
\end{theorem}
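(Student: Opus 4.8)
The plan is to apply formula~(\ref{eq:2}) directly, which requires enumerating the types in $\mathcal{T}(2)$ and, for each, computing $t(T)$ and $c(T)$. There are two kinds of diagonal matrices in $M_2(\mathbb{Z}_{p^k})$: those with a repeated diagonal entry (partition $2$) and those with two distinct diagonal entries (partition $1+1$). The repeated case contributes a single type $T_0$ with $c(T_0) = |GL_2(\mathbb{Z}_{p^k})|$, so its term in~(\ref{eq:2}) is just $t(T_0) \cdot 1 = p^k$, since there are $p^k$ scalar matrices $\lambda I_2$. This accounts for the leading $p^k$ in the claimed formula.

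For the partition $1+1$, the type is further refined by the single invariant $l = l_{12}$, where $p^l \mathrel{\|} (\lambda_1 - \lambda_2)$; here $l$ ranges over $\{0, 1, \dots, k-1\}$ (note $l = k$ is impossible since that would force $\lambda_1 = \lambda_2$). So $\mathcal{T}(2)$ consists of $T_0$ together with types $T_l$ for $0 \le l \le k-1$. First I would compute $c(T_l)$ from Proposition~\ref{thm:centralizer}: with $g = 2$, $m_1 = m_2 = 1$, this gives $c(T_l) = |GL_1(\mathbb{Z}_{p^k})|^2 \cdot p^{2 \cdot 1 \cdot 1 \cdot l} = (p^k - p^{k-1})^2 \, p^{2l} = p^{2k-2}(p-1)^2 p^{2l}$. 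Next I would count $t(T_l)$, the number of unordered pairs $\{\lambda_1, \lambda_2\}$ of distinct elements of $\mathbb{Z}_{p^k}$ with $p^l \mathrel{\|} (\lambda_1 - \lambda_2)$: choose $\lambda_1$ freely ($p^k$ ways), then $\lambda_2 = \lambda_1 + r p^l$ with $r$ a unit mod $p^{k-l}$, giving $p^{k-l} - p^{k-l-1}$ choices; dividing by $2$ for unordered pairs yields $t(T_l) = \tfrac12 p^k (p^{k-l} - p^{k-l-1}) = \tfrac12 p^{2k-l-1}(p-1)$.

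Then the partition-$(1+1)$ contribution to~(\ref{eq:2}) is
$$\sum_{l=0}^{k-1} t(T_l) \frac{|GL_2(\mathbb{Z}_{p^k})|}{c(T_l)} = \sum_{l=0}^{k-1} \frac{\tfrac12 p^{2k-l-1}(p-1) \cdot p^{4(k-1)}(p^2-1)(p^2-p)}{p^{2k-2}(p-1)^2 p^{2l}},$$
using the Lemma's value $|GL_2(\mathbb{Z}_{p^k})| = p^{4(k-1)}(p^2-1)(p^2-p)$. After cancelling, the summand becomes a constant times $p^{-3l}$, so the sum is a finite geometric series $\sum_{l=0}^{k-1} p^{-3l} = \frac{1 - p^{-3k}}{1 - p^{-3}} = \frac{p^{3k}-1}{p^{3k-3}(p^3-1)}$; multiplying through by the constant prefactor should produce exactly $\dfrac{p^{k+1}(p^2-1)(p^{3k}-1)}{2(p^3-1)}$. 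Adding the $p^k$ from the scalar matrices gives the theorem.

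The main obstacle is purely bookkeeping: keeping the powers of $p$ straight through the cancellation in the summand and correctly identifying the resulting geometric series, together with being careful that the factor of $\tfrac12$ (from passing to unordered pairs) is handled consistently with the convention in~(\ref{eq:1})--(\ref{eq:2}) that each similarity class of diagonal matrices is counted once. There is no conceptual difficulty beyond this; everything follows mechanically from Proposition~\ref{thm:centralizer}, the order formula for $|GL_n(\mathbb{Z}_{p^k})|$, and a routine count of ordered pairs with a prescribed $p$-adic difference.
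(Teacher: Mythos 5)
Your proposal is correct and follows essentially the same route as the paper: the scalar type contributes $p^k$, the partition $1+1$ splits into $k$ types indexed by the valuation $l$ of $\lambda_1-\lambda_2$ with $t(T_l)=\tfrac12 p^k\phi(p^{k-l})$ and $c(T_l)=p^{2l}\phi(p^k)^2$, and the sum collapses to the same geometric series. The arithmetic in your summand does cancel to $\tfrac12 p^{4k-2-3l}(p^2-1)$ and yields the stated closed form, matching the paper's computation.
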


\begin{proof}

In order to find $\vert \text{Diag}_2(\mathbb{Z}_{p^k}) \vert$, we need to enumerate all of the $2 \times 2$ diagonal matrix types. First of all, there are two possible partitions of $2$, namely $2$ and $1+1$. The trivial partition yields one distinct type of diagonal matrices 
$$T_1 = \Big\{\begin{pmatrix}
    \lambda & 0 \\
    0 & \lambda
\end{pmatrix} \; : \; \lambda \in \mathbb{Z}_{p^k} \Big\},$$
\noindent which consists of the $2 \times 2$ scalar matrices. Since there are $p^k$ choices for $\lambda$, we have $t(T_1) = p^k$. Moreover $c(T_1) = |GL_2(\mathbb{Z}_{p^k})|$, because any invertible matrix commutes with a scalar matrix. 

\vspace{.1 in}

The nontrivial partition $2 = 1 + 1$ yields the remaining $k$ distinct types of matrices that we index by $i \in \{0, 1, \dots , k-1\}$:
$$T_2^{(i)} = \Big\{\begin{pmatrix} \lambda_1 & 0 \\ 0 & \lambda _2
\end{pmatrix} \; : \; p^i \; || \; (\lambda_1-\lambda_2) \Big\}.$$

\noindent Fix $i \in \{0, 1, \dots , k-1\}$; we now enumerate $t(T_2^{(i)})$ and $c(T_2^{(i)})$. For $t(T_2^{(i)})$, we first observe that there are $p^k$ choices for $\lambda_1$. To find the number of choices for $\lambda_2$, observe that $\lambda_1-\lambda_2 \equiv rp^i \bmod p^k$ for some unique $r \in (\mathbb{Z}_{p^{k-i}})^*$. Hence, there are $\phi(p^{k-i})$ choices for $r$ and thus for $\lambda_2$. (As a reminder, $\phi$ denotes the Euler phi function, and $\phi(p^l) = p^{l-1}(p-1)$.) Since swapping $\lambda_1$ and $\lambda_2$ does not change the similarity class of the diagonal matrix, we conclude that
$$t(T_2^{(i)})=\dfrac{p^k \phi (p^{k-i})}{2!}.$$
\noindent Next, applying Proposition \ref{thm:centralizer} yields $c(T_2^{(i)}) = p^{2i} \phi(p^k)^2.$ 

\vspace{.1 in}

Finally, we use (\ref{eq:2}) to enumerate the $2 \times 2$ diagonal matrices and conclude that
\begin{align*}
\vert\text{Diag}_2(\mathbb{Z}_{p^k})\vert &= 
t(T_1) \frac{\vert GL_n(\mathbb{Z}_{p^k})\vert }{c(T_1)} + \sum_{i=0}^{k-1} t(T_2^{(i)}) 
\frac{\vert GL_n(\mathbb{Z}_{p^k})\vert }{c(T_2^{(i)})}\\
       & = p^k + \dfrac{p^k}{2} \cdot \dfrac{p^{4(k-1)}(p^2-1)(p^2-p)}{\phi(p^k)^2} \sum_{i=0}^{k-1} \dfrac{\phi(p^{k-i})}{p^{2i}} \\
       & = p^k + \dfrac{p^k}{2} \cdot \dfrac{p^{4(k-1)}(p^2-1)(p^2-p)}{(p^{k-1} (p-1))^2} \sum_{i=0}^{k-1} \dfrac{p^{k-i-1} (p-1)}{p^{2i}} \\
       & = p^k + \dfrac{p^{4k-2}(p^2-1)}{2} \sum_{i=0}^{k-1} \dfrac{1}{p^{3i}} \\
       & = p^k + \dfrac{p^{4k-2}(p^2-1)}{2} \cdot \frac{1 - p^{-3k}}{1 - p^{-3}}, \text{ using the geometric series}\\
       & = p^k + \dfrac{p^{k+1}(p^2-1)(p^{3k}-1)}{2(p^3-1)}.
\end{align*}
\end{proof}

\noindent \textbf{Remarks}: Observe that in the case where $k = 1$, the formula reduces to $\frac{1}{2}(p^4 - p^2 + p)$, which can be found at the end of Section 3 in Kaylor \cite{Kaylor} after you remove the contributions from the $2 \times 2$ Jordan block case. Moreover, for the diagonal matrix types corresponding to the nontrivial partition and $i \geq 1$, we are dealing with differences of diagonal entries yielding zero divisors in $\mathbb{Z}_{p^k}$; these scenarios never occur when $k = 1$ because $\mathbb{Z}_p$ is a field. 

\section{Enumerating \texorpdfstring{$n \times n$}{TEXT} Diagonal Matrices of a Given Type}

\subsection{Representing a Diagonal Matrix with a Valuation Graph}

As we increase the value of $n$, the enumeration of $n \times n$ diagonalizable matrices over $\mathbb{Z}_{p^k}$ becomes more involved, because the number of distinct types becomes increasingly difficult to catalog. The difficulties come both from the powers of $p$ dividing the differences of the diagonal entries of the matrix as well as the increasing number of partitions of $n$. In order to aid us in classifying diagonal matrices into distinct types, we introduce an associated graph to help visualize these scenarios.

\vspace{.1 in}

Let $D \in M_n(\mathbb{Z}_{p^k})$ be diagonal with distinct diagonal entries $\lambda_1, \dots, \lambda_g \in \mathbb{Z}_{p^k}$. Ordering the elements in $\mathbb{Z}_{p^k}$ by $0 < 1 < 2 < \dots < p^k - 1$, we can assume without loss of generality that $\lambda_1 < \lambda_2 < \dots < \lambda_g$ (since $D$ is similar to such a matrix by using a suitable permutation matrix as the change of basis matrix). Associated to $D$, we define its associated weighted complete graph $G_D$ (abbreviated as $G$ when no ambiguity can arise) as follows: We label its $g$ vertices with the diagonal entries $\lambda_1, \lambda_2, \dots , \lambda_g$, and given the edge between the vertices $\lambda_i$ and $\lambda_j$, we define its weight $l_{ij}$ as the unique non-negative integer satisfying $p^{l_{ij}} \mid\mid (\lambda_i - \lambda_j)$.

\begin{definition}
Let $D \in M_n(\mathbb{Z}_{p^k})$ be diagonal. We call the weighted complete graph $G$ associated to $D$ as constructed above the \textbf{valuation graph}
of $D$.
\end{definition}
\bigskip

\noindent The following fundamental property of such graphs justifies why we call these valuation graphs.

\begin{prop}
\textbf{(Triangle Inequality)} \label{thm:triangleinequality} Let $G$ be a valuation graph. Given vertices $\lambda_a$, $\lambda_b$, and $\lambda_c$ in $G$ and edges $E_{ab}$, $E_{ac}$, and $E_{bc}$, the weights satisfy $l_{bc} \geq \min \{l_{ab}, l_{ac}\}$. 
In particular, $l_{bc} = \min \{l_{ab}, l_{ac}\}$ if $l_{ab} \neq l_{ac}$.
\end{prop}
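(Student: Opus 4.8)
The plan is to reduce this to the standard ultrametric property of the $p$-adic valuation. Write $v_p$ for the $p$-adic valuation on $\mathbb{Z}$, normalized so that $v_p(p^l u) = l$ when $p \nmid u$. The weight $l_{ij}$ on the edge $E_{ij}$ is by definition $v_p(\lambda_i - \lambda_j)$, where $\lambda_i - \lambda_j$ is interpreted as an integer in the appropriate range (equivalently, we may lift the $\lambda_i$ to $\mathbb{Z}$; since all differences involved are units times powers of $p$ with exponent at most $k-1$, and any vertex in the valuation graph comes with $l_{ij} < k$, the valuation is well-defined and insensitive to the choice of lift). So the statement to prove is exactly: $v_p(\lambda_b - \lambda_c) \geq \min\{v_p(\lambda_a - \lambda_b), v_p(\lambda_a - \lambda_c)\}$.

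First I would record the identity $\lambda_b - \lambda_c = (\lambda_b - \lambda_a) + (\lambda_a - \lambda_c)$. Then I would invoke (or quickly prove) the elementary fact that $v_p(x + y) \geq \min\{v_p(x), v_p(y)\}$ for integers $x, y$: writing $x = p^{\alpha} u$ and $y = p^{\beta} w$ with $p \nmid u, w$ and say $\alpha \leq \beta$, we get $x + y = p^{\alpha}(u + p^{\beta - \alpha} w)$, so $v_p(x+y) \geq \alpha = \min\{\alpha, \beta\}$. Applying this with $x = \lambda_b - \lambda_a$ and $y = \lambda_a - \lambda_c$ gives $l_{bc} \geq \min\{l_{ab}, l_{ac}\}$ immediately (using $l_{ab} = v_p(\lambda_b - \lambda_a) = v_p(\lambda_a - \lambda_b)$).

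For the "in particular" clause, suppose $l_{ab} \neq l_{ac}$, and WLOG $\alpha := l_{ab} < \beta := l_{ac}$. Then in the computation above, $x + y = p^{\alpha}(u + p^{\beta-\alpha} w)$ with $p \nmid u$ and $p \mid p^{\beta - \alpha} w$ (since $\beta - \alpha \geq 1$), so $p \nmid (u + p^{\beta-\alpha}w)$, which forces $v_p(x+y) = \alpha$ exactly. Hence $l_{bc} = \alpha = \min\{l_{ab}, l_{ac}\}$. One subtlety worth a sentence: I should make sure the argument is valid modulo $p^k$ rather than over $\mathbb{Z}$ — but since $l_{bc}$ is defined as the exact power of $p$ dividing $\lambda_b - \lambda_c$ as an element of $\mathbb{Z}_{p^k}$, and the minimum of the two input weights is $< k$ (at least when the edges lie in a genuine valuation graph with $k \geq$ something; more carefully, if $\lambda_b \neq \lambda_c$ the weight is a well-defined integer in $\{0, \dots, k-1\}$, and if $\lambda_b = \lambda_c$ the vertices coincide and there is no edge), the reduction is harmless.

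I do not expect any real obstacle here: the whole content is the ultrametric inequality for $v_p$, and the only thing requiring a moment's care is phrasing it correctly for elements of $\mathbb{Z}_{p^k}$ rather than $\mathbb{Z}$, together with noting that the three vertices are distinct so the three edges (and hence the three weights) genuinely exist.
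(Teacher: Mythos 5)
Your proof is correct and follows essentially the same route as the paper's: both decompose $\lambda_b - \lambda_c$ as the difference of the other two edge differences, factor out the smaller power of $p$, and observe that the remaining factor is a unit precisely when the two weights differ. The only cosmetic difference is that you phrase this as the ultrametric inequality for the $p$-adic valuation on integer lifts, whereas the paper performs the identical computation directly with unit-times-$p$-power representatives in $\mathbb{Z}_{p^k}$.
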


\begin{proof} By hypothesis, we know that $l_{ab}$ and $l_{ac}$ are the biggest non-negative integers satisfying $$\lambda_a - \lambda_b = rp^{l_{ab}} \text{ and } \lambda_a - \lambda_c = sp^{l_{ac}} \text{ for some } r, s \in \mathbb{Z}_{p^k}^*.$$

\noindent Without loss of generality, assume that $l_{ab} \geq l_{ac}$. Then, we obtain
$$\lambda_b - \lambda_c = (\lambda_a - \lambda_c) - (\lambda_a - \lambda_b) = p^{l_{ac}} (s - r p^{l_{ab} - l_{ac}}).$$

\noindent If $l_{ab} > l_{ac}$, then $(s - r p^{l_{ab} - l_{ac}}) \in \mathbb{Z}_{p^k}^*$, and if $l_{ab} = l_{ac}$ then $s-r$ may or may not be a zero divisor in $\mathbb{Z}_{p^k}$. The claim now immediately follows.
\end{proof}

Observe that since the valuation graph arises from a diagonal matrix in $M_n(\mathbb{Z}_{p^k})$, it is clear that its weights can only attain integral values between 0 and $k-1$ inclusive. In fact, we can give another restriction on the possible values of its weights.

\begin{lemma}\label{thm:number_of_weights}
A valuation graph $G$ on $g$ vertices has no more than $g-1$ weights.
\end{lemma}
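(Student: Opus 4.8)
The plan is to exploit the ultrametric nature of the weights, as recorded in Proposition \ref{thm:triangleinequality}, to show that a maximal ``weight-diverse'' collection of edges cannot contain a cycle.

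First I would upgrade the triangle inequality to a statement about arbitrary paths: by a routine induction on path length, if $\lambda_{i_0}, \lambda_{i_1}, \dots, \lambda_{i_m}$ is a path in $G$, then $l_{i_0 i_m} \geq \min\{l_{i_0 i_1}, l_{i_1 i_2}, \dots, l_{i_{m-1} i_m}\}$; the inductive step is a single application of Proposition \ref{thm:triangleinequality} to the triangle on $\lambda_{i_0}$, $\lambda_{i_{m-1}}$, $\lambda_{i_m}$. Next I would prove the key combinatorial lemma: in any cycle of $G$, the smallest weight occurring on that cycle is attained by at least two of its edges. Indeed, if some cycle had a unique minimum-weight edge $E$ joining $\lambda_a$ and $\lambda_b$, of weight $w$, then deleting $E$ leaves a path $P$ from $\lambda_a$ to $\lambda_b$ all of whose edge weights exceed $w$, and the path inequality forces $l_{ab} \geq \min_{e \in P}(\text{weight of }e) > w = l_{ab}$, a contradiction.

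The proof then finishes quickly. For each distinct weight value $w$ occurring in $G$, pick one edge $e_w$ realizing it. These are distinct edges with pairwise distinct weights, so if they contained a cycle, that cycle would have a unique minimum-weight edge, contradicting the lemma. Hence the chosen edges form a forest on the $g$ vertices of $G$, and a forest on $g$ vertices has at most $g-1$ edges; therefore $G$ has at most $g-1$ distinct weights. (The cases $g \leq 1$ have no edges and are trivial.)

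The main obstacle is really just the cycle lemma — setting up the path-extension of the triangle inequality and handling the ``unique minimum'' hypothesis cleanly; once that is in place the forest argument is immediate. An alternative I considered is induction on $g$: fix the minimum weight $w_0$ appearing in $G$, observe that ``$i=j$ or $l_{ij} > w_0$'' is an equivalence relation on the vertices by the ultrametric inequality, split the vertex set into the resulting classes $V_1, \dots, V_r$ with $r \geq 2$, note that every cross-class edge has weight exactly $w_0$, and apply the inductive hypothesis to each $V_a$ to bound the number of weights by $1 + \sum_a(|V_a| - 1) = 1 + g - r \leq g - 1$. Either route works; I slightly prefer the forest argument since it avoids bookkeeping over the partition.
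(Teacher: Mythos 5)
Your proof is correct, but it takes a genuinely different route from the paper. The paper argues by induction on the number of vertices: adding a vertex $\lambda_{g+1}$ to a valuation graph on $g$ vertices, it uses the Triangle Inequality (Proposition \ref{thm:triangleinequality}) to show that among the new edges incident to $\lambda_{g+1}$, at most one previously unseen weight can occur, so the weight count grows by at most one per added vertex. You instead prove a global structural fact: after extending the triangle inequality to paths, you show that the minimum weight on any cycle of $G$ is attained by at least two edges, so a set of representative edges with pairwise distinct weights is acyclic and hence a forest with at most $g-1$ edges. Both arguments are sound and of comparable length. Your cycle lemma is the standard ultrametric fact underlying minimum spanning trees, and it meshes nicely with the permissible spanning trees the paper constructs immediately afterward -- your forest of representatives is essentially a partial skeleton of such a tree -- whereas the paper's vertex-by-vertex induction is more self-contained but leaves the reader to verify the inductive step (that a second new weight at $\lambda_{g+1}$ would violate the Triangle Inequality) on their own. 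Your alternative induction on $g$ via the equivalence classes of ``$i=j$ or $l_{ij}>w_0$'' is also valid and is closer in flavor to the paper's use of the subgraphs $G_{a_i}$ in the spanning-tree construction; any of the three arguments would serve.
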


\begin{proof}
We prove this by induction on the number of vertices $g$. This claim is true for $g = 2$, because such a graph has exactly one weight. Next, we assume that the claim is true for any valuation graph on $g$ vertices, and consider a valuation graph $G$ with vertices $\lambda_1, \dots,  \lambda_{g+1}$. By the inductive hypothesis, the valuation subgraph $H$ of $G$ with vertices $\lambda_1, \dots, \lambda_g$ has no more than $g-1$ weights. It remains to consider the weights of the edges from these vertices to the remaining vertex $\lambda_{g+1}$. If none of these edges have any of the $g-1$ weights of $H$, then we are done. Otherwise, suppose that one of these edges (call it $E$) has an additional weight. Then for any edge $E'$ other than $E$ that has $\lambda_{g+1}$ as a vertex, the Triangle Inequality (Prop. \ref{thm:triangleinequality}) implies that $E'$ has no new weight. Hence, $G$ has no more than $(g-1)+1 = g$ weights as required, and this completes the inductive step.






\end{proof}

We know that for any diagonal matrix $D \in M_n(\mathbb{Z}_{p^k})$, its valuation graph $G$ satisfies the Triangle Inequality. Moreover, any complete graph on $n$ vertices satisfying the Triangle Inequality necessarily corresponds to a collection of diagonal matrices with distinct diagonal entries in $M_n(\mathbb{Z}_{p^k})$ as long as there are at most $n-1$ weights and the maximal weight is at most $k-1$. Moreover, such a graph also corresponds to a collection of diagonal matrices with non-distinct diagonal entries in $M_N(\mathbb{Z}_{p^k})$ where $N$ is the sum of these multiplicities.








\subsection{Enumerating Diagonalizable Matrices with a Given Valuation Graph} 

Throughout this section, we assume that the diagonal matrix in $M_n(\mathbb{Z}_{p^k})$ has distinct diagonal entries. Given its valuation graph $G$, we construct a specific kind of spanning tree that will aid us in enumerating the diagonal matrices in $M_n(\mathbb{Z}_{p^k})$ having valuation graph $G$. In a sense, such a spanning tree concisely shows the dependencies among the diagonal entries of a given diagonal matrix. 

\begin{prop}
Given a diagonal matrix $D \in M_n(\mathbb{Z}_{p^k})$ with distinct diagonal entries having valuation graph $G$, there exists a spanning tree $T \subset G$ from which we can uniquely reconstruct $G$. We call $T$ a \textbf{permissible spanning tree} of $G$.
\end{prop}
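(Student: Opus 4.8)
The plan is to build the spanning tree $T$ greedily by processing the weights of $G$ from largest to smallest, exploiting the Triangle Inequality (Proposition \ref{thm:triangleinequality}) to guarantee that the omitted edges are forced. First I would fix the ordering $\lambda_1 < \lambda_2 < \cdots < \lambda_g$ on the vertices and list the distinct weights appearing in $G$ as $w_1 > w_2 > \cdots > w_r$ (with $r \le g-1$ by Lemma \ref{thm:number_of_weights}). For each threshold $w$, consider the subgraph $G_{\ge w}$ consisting of all edges of weight $\ge w$; by the Triangle Inequality, the relation ``$l_{ij} \ge w$'' is transitive on vertices (if $l_{ab}\ge w$ and $l_{bc}\ge w$ then $l_{ac}\ge\min\{l_{ab},l_{bc}\}\ge w$), so the edges of weight $\ge w$ partition the vertices into cliques, i.e.\ the connected components of $G_{\ge w}$ are complete subgraphs. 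This gives a refining chain of partitions of the vertex set as $w$ decreases, which is exactly the structure of a hierarchical clustering / ultrametric.

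Next I would construct $T$ one weight-level at a time. Start with each vertex its own block at the top level. Going down through $w_1, w_2, \dots, w_r$: at level $w_t$, several blocks from the previous level merge into each new block; for each such merge, add exactly enough edges of weight $w_t$ to connect the merging blocks into a tree on the blocks (contracting each old block to a point) — say, always using the edge joining the two least-indexed vertices of the blocks being joined, to make the choice canonical. After processing all levels we have added exactly $g-1$ edges and the result is connected and acyclic, hence a spanning tree $T$; moreover every edge of $T$ carries the correct weight $l_{ij}$ inherited from $G$.

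Finally I would verify that $G$ is recoverable from $T$. Given $T$ with its edge weights, for any two vertices $\lambda_i, \lambda_j$ I claim $l_{ij}$ in $G$ equals $\min\{\text{weights along the unique } T\text{-path from } \lambda_i \text{ to } \lambda_j\}$. Indeed, iterating the Triangle Inequality along that path gives $l_{ij} \ge$ that minimum; and conversely, if the minimum along the path is $w$, then $\lambda_i$ and $\lambda_j$ lie in different components of $G_{\ge w+1}$ (the path leaves the component at the edge of weight $w$), so by the component-partition fact above $l_{ij} < w+1$, i.e.\ $l_{ij}\le w$. Hence $l_{ij} = w$, and every weight of $G$ is determined by $T$, so $G$ is reconstructed uniquely. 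I would then define a \textbf{permissible spanning tree} to be any spanning tree arising from this level-by-level construction (equivalently, one in which each edge of weight $w$ is a ``lowest-weight'' connection between its endpoints' $G_{\ge w+1}$-components), and note existence is exactly what the construction provides.

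The main obstacle is not existence — the greedy construction is routine once the clique-partition observation is in hand — but pinning down the right \emph{definition} of ``permissible'' so that the reconstruction map $T \mapsto G$ is genuinely well-defined and so that the later counting arguments (which presumably sum over permissible trees) are clean; concretely, the care goes into showing that the reconstructed weight $l_{ij}$ is forced to be the path-minimum and cannot be larger, which is where the second, sharp half of the Triangle Inequality ($l_{bc} = \min\{l_{ab},l_{ac}\}$ when $l_{ab}\neq l_{ac}$) does the work.
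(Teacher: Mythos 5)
Your proposal is correct, and its first half (existence) is essentially the paper's own construction: you both order the distinct weights, observe via the Triangle Inequality (Proposition \ref{thm:triangleinequality}) that the edges of weight at least $w$ partition the vertices into cliques, and build $T$ top-down by choosing a spanning forest at each threshold that extends the forest from the previous one. Where you genuinely diverge is the reconstruction step. The paper recovers $G$ procedurally, descending through the weights and repeatedly filling in the ``completing edge'' of any two tree edges that share a vertex, arguing at each stage that the new edge's weight is forced and that no further edges of that weight can exist without contradicting the construction of $T$. You instead prove a closed-form reconstruction rule: $l_{ij}$ equals the minimum weight along the unique $T$-path from $\lambda_i$ to $\lambda_j$, with the lower bound from iterating the Triangle Inequality along the path and the upper bound from the fact that a permissible tree restricted to weights $\ge w+1$ spans exactly the components of $G_{\ge w+1}$, so a path whose minimum is $w$ certifies that its endpoints lie in different such components. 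This minimax-path (ultrametric) formulation is arguably cleaner and more clearly well-defined than the paper's iterative completing-edge argument, and it makes explicit why permissibility is needed (an arbitrary spanning tree fails the path-minimum formula, e.g.\ on three vertices with weights $2,0,0$ one can choose the two weight-$0$ edges). The trade-off is that your upper-bound step leans on the component-matching property of the construction, which you correctly fold into the definition of ``permissible,'' whereas the paper's procedure works directly with the tree's local structure; both are valid, and both give the same class of trees.
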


\begin{proof} 
Suppose that $G$ is a valuation graph on $n$ vertices with $r$ distinct weights $a_1, a_2, \ldots , a_r$ listed in increasing order. In order to construct a permissible spanning tree for $G$, we consider the following construction.

\vspace{.1 in}

For each weight $a_i$ with $1 \leq i \leq r$, define $G_{a_i}$ to be the subgraph of $G$ consisting of the edges with weight \emph{at most} $a_i$ along with their respective vertices. From the definition of a weight, we immediately see that $G_{a_1} \supseteq G_{a_2} \supseteq \dots \supseteq G_{a_r}$. Moreover, Prop. \ref{thm:triangleinequality}  implies that each connected component of $G_{a_i}$ is a complete subgraph of $G$. 

\vspace{.1 in}

To use these subgraphs to construct a permissible spanning tree for $G$, we start with the edges in $G_{a_r}$. For each connected component of $G_{a_r}$, we select a spanning tree and include all of their edges into the edge set $E$. Next, we consider the edges in $G_{a_{r-1}}$. For each connected component of $G_{a_{r-1}}$, we select a spanning tree that includes the spanning tree from the previous step. We inductively repeat this process until we have added any pertinent edges from $G_{a_1}$. (Note that since $G_{a_1}$ contains only one connected component, $T$ must also be connected.) The result is a desired permissible spanning tree $T$ for our valuation graph $G$. 

\vspace{.1 in}

Next, we show how to uniquely reconstruct the valuation graph $G$ from $T$. To aid in this procedure, we say that \textit{completing edge} of two edges $e_1,e_2$ in $G$ that share a vertex is the edge $e_3$ which forms a complete graph $K_3$ with $e_1$ and $e_2$.

\vspace{.1 in}

Start by looking at the edges having the largest weight $a_r$ in $T$. If two edges with weight $a_r$ share a vertex, then their completing edge in $G$ must also have weight $a_r$ by the maximality of $a_r$. Upon completing this procedure, there can be no other edges in $G$ of weight $a_r$, as this would violate the construction of $T$. 

\vspace{.1 in}

Next consider the edges having weight $a_{r-1}$ (if they exist). For any two edges of weight $a_{r-1}$ that share a vertex, their completing edge must have weight $a_{r-1}$ or $a_r$ by the Triangle Inequality. If the completing edge had weight $a_r$, then we have already included this edge from the previous step. Otherwise, we conclude that the completing edge must have weight $a_{r-1}$. 

\vspace{.1 in}

Continuing this process to the lowest edge coloring $a_1$, we reconstruct $G$ as desired.
\end{proof}

We now return to the problem of enumerating diagonal $n \times n$ matrices over $\mathbb{Z}_{p^k}$ of a given type. We begin with the case that $A \in M_n(\mathbb{Z}_{p^k})$ is a diagonal matrix over $\mathbb{Z}_{p^k}$ with distinct diagonal entries. Let $G$ be its associated valuation graph with $r$ distinct weights $a_1, a_2, \dots, a_r$. 

\begin{definition}
Let $T$ be a permissible spanning tree of a valuation graph $G$. We say that a subset of edges in $T$ all with weight $a_t$ are \textbf{linked} if there exists a subtree $S$ of $T$ containing these edges such that each edge in $S$ has weight at least $a_t$.
\end{definition}

We use the notion of linked edges to partition the set of edges from our permissible tree $T$ beyond their weights as follows. Let $L^{t}$ denote the set of edges in $T$ with weight $a_t$. Then, $L^{t}$ decomposes into pairwise disjoint sets $L_1^{t}, \dots, L_{\ell(t)}^{t}$ for some positive integer $\ell(t)$, where each $L_j^{t}$ is a maximal subset of linked edges from $L^{t}$.

\begin{definition}
Let $T$ be a permissible spanning tree for a given valuation graph $G$. For a given weight $a_t$, we say that $L_1^{t}, \dots, L_{\ell(t)}^{t}$ are the \textbf{linked cells} of the weight $a_t$.
\end{definition}


\begin{theorem}\label{thm:linked}
Let $G$ be a valuation graph having $r$ distinct weights $a_1,a_2,\dots,a_r$ listed in increasing order, and let $T$ be a permissible spanning tree of $G$ with linked cells $L_j^{t}$. Then, the total number of diagonal matrix classes having distinct diagonal entries in $M_n(\mathbb{Z}_{p^k})$ with an associated valuation graph isomorphic to $G$ equals

$$\frac{p^k}{|\emph{Aut}(G)|} \cdot \prod_{t=1}^r \prod_{j=1}^{\ell(t)} \prod_{i=1}^{|L_j^{t}|} \phi_{i}(p^{k-a_t}),$$
\noindent where $\phi_{i}(p^j) = p^j - ip^{j-1}$, and $\text{Aut}(G)$ denotes the set of weighted graph automorphisms of $G$. 
\end{theorem}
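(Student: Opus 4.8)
The plan is to count diagonal matrix classes by building up the diagonal entries one weight-layer at a time, using the permissible spanning tree $T$ as a scaffold and dividing out by the automorphisms of $G$ at the end. First I would fix a permissible spanning tree $T$ of $G$ and recall that, by the reconstruction property, specifying a diagonal matrix whose valuation graph is isomorphic to $G$ is the same as choosing the diagonal entries $\lambda_1, \dots, \lambda_n$ (as an unordered set, since we are counting similarity classes) so that the induced weights along the edges of $T$ match those of $T$ — the remaining edge weights are then forced by the Triangle Inequality (Prop.~\ref{thm:triangleinequality}) and hence automatically correct. So the combinatorial heart of the problem is: how many ways can we assign values in $\mathbb{Z}_{p^k}$ to the vertices so that each tree edge $e$ with weight $a_t$ satisfies $p^{a_t} \mathrel{\|} (\lambda_u - \lambda_v)$?

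Next I would set up the layer-by-layer count. Process the weights from \emph{largest} to \emph{smallest}. Pick a root vertex and assign it any of the $p^k$ values; this contributes the leading factor $p^k$. Now think of $T$ as stratified: contracting all edges of weight $> a_t$ gives the connected components that the linked cells of weight $a_t$ live on. When we come to add an edge $e \in L_j^t$, say joining an already-determined vertex $u$ to a new vertex $v$, we must choose $\lambda_v$ with $\lambda_v - \lambda_u = r p^{a_t}$ for some unit $r \in \mathbb{Z}_{p^{k-a_t}}^{*}$; naively that is $\phi(p^{k-a_t})$ choices. The subtlety is that within a single linked cell $L_j^t$, once we have added the $i$-th such edge, the new vertex $v$ must not only be at valuation exactly $a_t$ from $u$, but must also be at valuation exactly $a_t$ (not larger) from every previously-added vertex in the same linked cell — because those vertices are joined to $u$ by a path of edges all of weight $\ge a_t$, so by the Triangle Inequality their difference from $\lambda_u$ already has valuation $\ge a_t$, and we need the difference $\lambda_v - (\text{that vertex})$ to have valuation exactly $a_t$ rather than being forced up. Working modulo $p^{k-a_t}$ and writing $\lambda_v = \lambda_u + r p^{a_t}$, each previously-placed vertex in the cell excludes exactly one residue class for $r$ mod $p$ (the one making the new difference divisible by $p^{a_t+1}$), and these excluded classes are distinct. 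Hence the number of admissible $r$ when adding the $i$-th edge of a cell is $p^{k-a_t} - i\,p^{k-a_t-1} = \phi_i(p^{k-a_t})$. Multiplying over all edges of all linked cells of all weights — and noting that edges in different linked cells, or the same weight in different contracted components, or different weights, impose independent constraints (the cross-component differences having their valuations already pinned below $a_t$, so no collision) — yields $p^k \prod_t \prod_j \prod_{i=1}^{|L_j^t|} \phi_i(p^{k-a_t})$ ordered labelings.

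Finally, I would pass from labeled counts to isomorphism classes of graphs, equivalently to similarity classes of diagonal matrices. The count above is over all orderings/labelings of the vertex set realizing a graph \emph{equal} to the given copy of $G$; two such labelings give the same similarity class precisely when they differ by a weighted-graph automorphism of $G$ (a permutation of vertices preserving all edge weights), which is exactly what swapping diagonal entries does to the valuation graph. Since all diagonal entries are distinct, $\mathrm{Aut}(G)$ acts freely on the set of these labelings, so we divide by $|\mathrm{Aut}(G)|$, giving the stated formula. I expect the main obstacle to be the second step: carefully justifying that the constraints from distinct previously-placed vertices within a linked cell exclude \emph{distinct} residue classes mod $p$ (so the counts $\phi_i$ multiply cleanly), and that constraints across different linked cells and different weights are genuinely independent — this is where the precise interplay between the tree structure, the contracted components, and the Triangle Inequality has to be invoked rather than hand-waved. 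One must also check that choosing a different permissible spanning tree $T$, or a different system of linked cells, gives the same product — this should follow because the final answer counts an intrinsic quantity, but it is worth a remark.
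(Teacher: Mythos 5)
Your proposal follows essentially the same route as the paper's proof: process the tree edges by linked cells in descending weight order, count $\phi_i(p^{k-a_t})$ admissible edge differences for the $i$-th edge of a cell by excluding residue classes mod $p$ forced by the Triangle Inequality, multiply by $p^k$ for the base vertex, and divide by $|\mathrm{Aut}(G)|$. The ``main obstacle'' you flag --- that the excluded residue classes are distinct and the constraints across cells are independent --- is precisely what the paper addresses with its two-case analysis (a new edge sharing a vertex with a previous one versus being linked to it through a path of higher-weight edges), where a telescoping sum reduces the constraint to $r + r' \not\equiv 0 \bmod p$.
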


\begin{proof} 
Fix a valuation graph $G$. The key idea is to consider the edges of its permissible spanning tree via linked cells, one weight at a time in descending order. Throughout the proof, we use the following convention: If an edge $E$ has vertices $\lambda_1,\lambda_2$ with $\lambda_2 > \lambda_1$, we refer to the value $\lambda_2 - \lambda_1$ as the \textit{edge difference} associated with $E$.

\vspace{.1 in}

First consider the edges in the linked cell of the maximal weight $a_r$. Without loss of generality, we start with the edges in $L_1^{r}$. Since $a_r$ is maximal, we know that $L_1^{r}$ is itself a tree. For brevity, we let $m = |L_1^{r}|$. Then, $L_1^{r}$ has $m$ edges connecting its $m+1$ vertices. We claim that there are $\prod_{i=1}^m \phi_i(p^{k-a_r})$ ways to label the values of the edge differences. 

\vspace{.1 in}

To show this, we start by picking an edge in $L_1^{r}$, and let $\lambda_1$ and $\lambda_2$ denote its vertices. Since $\lambda_2 - \lambda_1 = s_1 p^{a_r}$ for some $s_1 \in \mathbb{Z}_{p^{k-a_r}}^*$, we see that $\lambda_2 - \lambda_1$ can attain $\phi(p^{k-a_r}) = \phi_1(p^{k-a_r})$ distinct values. Next, we pick a second edge in $L_1^{r}$ that connects to either $\lambda_1$ or $\lambda_2$; without loss of generality (relabeling vertices as needed), suppose it is $\lambda_2$. Letting $\lambda_3$ denote the other vertex of this edge, then $\lambda_3 - \lambda_2 = s_2 p^{a_r}$ for some $s_2 \in \mathbb{Z}_{p^{k-a_r}}^*$. However because $a_r$ is the maximal weight in $G$, the edge connecting $\lambda_1$ and $\lambda_3$ also has weight $a_r$. On the other hand, we have
$$\lambda_3 - \lambda_1 = (\lambda_3 - \lambda_2) + (\lambda_2 - \lambda_1) = (s_2 + s_1)p^{a_r} \text{ where } s_2 + s_1 \in \mathbb{Z}^*_{p^{k-a_r}}.$$

\noindent Hence, $s_2 \not\equiv -s_1 \bmod p^{k-{a_r}}$, and therefore there are $\phi_1(p^{k-a_r}) - p^{k-a_r-1} = \phi_2(p^{k-a_r})$ possible values for $s_2$.
Repeating this procedure, we can assign $\phi_i(p^{k-a_r})$ values to the difference of the vertices from the $i$th edge in $L_1^{r}$. Now the claim immediately follows.

\vspace{.1 in}

The preceding discussion applies to any of the linked cells of weight $a_r$, because edges in distinct linked cells never share a common vertex. Hence, we conclude that the number of possible values of edge differences in $L^{r}$ equals
$$\prod_{j=1}^{\ell(r)} \prod_{i=1}^{|L_j^{r}|} \phi_{i}(p^{k-a_r}).$$

Next, suppose that we have enumerated all edge differences from all linked cells having weight $a_{t+1}, \dots, a_r$ for some fixed $t$. We now consider linked cells for the weight $a_t$. The procedure proceeds just as before, with the only difference being that two edges of any weight lower than $a_r$ may be linked via some subtree of $T$ containing other higher weights. However this presents no new difficulties.

\vspace{.1 in}

Fix a linked cell with weight $a_t$ and choose a first edge with vertices $\lambda_{c_1}$ and $\lambda_{c_2}$. As above, this edge corresponds to one of $\phi_1(p^{k-a_t})$ possible differences between values $\lambda_{c_1}$ and $\lambda_{c_2}$. Given another edge linked to the aforementioned edge in this linked cell, it either shares or does not share a vertex with the first edge. We consider these cases separately.

\vspace{.1 in}

First, suppose the two edges share a common vertex $\lambda_{c_2}$. Then as in the previous case, the connecting edge between $\lambda_{c_1}$ and $\lambda_{c_3}$ must have weight at least $a_t$ (as this edge otherwise has weight greater than $a_t$ and such vertices have been previously considered),

and thus we can choose the value for $\lambda_{c_3} - \lambda_{c_2}$ in $\phi_2(p^{k-a_t})$ ways. 

\vspace{.1 in}

Alternately, suppose that the two edges are connected through already established edges of higher weights on the vertices $\lambda_{d_1}, \lambda_{d_2}, \dots, \lambda_{d_s}$. 
Without loss of generality, assume that the vertices $\lambda_{c_1}$ and $\lambda_{c_4}$ are the initial and terminal vertices, respectively, in this second edge.
We know that $\lambda_{c_2} - \lambda_{c_1} = rp^{k-a_t}$ and $\lambda_{c_4} - \lambda_{c_3} = r'p^{a_t}$ for some $r,r' \in \mathbb{Z}^*_{p^{k-a_t}}$. Also since the edges connecting $\lambda_{c_2}$ to $\lambda_{d_1}$, $\lambda_{d_s}$ to $\lambda_{c_3}$, and  $\lambda_{d_i}$ to $\lambda_{d_j}$ for all $1 \leq i < j \leq s$ have weights higher than $a_t$, it follows that $0 \equiv \lambda_{d_1}-\lambda_{c_2} \equiv \lambda_{c_3}-\lambda_{d_s} \equiv \lambda_{d_j}-\lambda_{d_i} \bmod{p^{a_t+1}}$ and these observations give us
\begin{align*}
\lambda_{c_4} - \lambda_{c_1} &\equiv (\lambda_{c_2} - \lambda_{c_1}) + (\lambda_{d_1} - \lambda_{c_2}) + (\lambda_{d_2} - \lambda_{d_1}) + \dots + (\lambda_{c_3} - \lambda_{d_s}) + (\lambda_{c_4} - \lambda_{c_3}) \\
&\equiv (r + r') p^{a_t} \bmod{p^{a_t+1}}. 
\end{align*}

\noindent However, by an inductive use of the Triangle Inequality, we see that the edge directly connecting $c_1$ and $c_4$ must have weight $a_t$. Thus, $r + r' \not\equiv 0 \bmod p$, and the number of permissible choices for $r'$ is therefore $p^{k-a_t}-2p^{k-a_t-1} = \phi_2(p^{k-a_t})$.

\vspace{.1 in}

Continuing this process, we can see that when we add the $i$-th edge in this linked cell (if it exists), we can find a path between it and the previous $(i-1)$ edges in $T$ sharing the same linked cell, giving $\phi_i(p^{k-a_t})$ choices for the corresponding edge differences.

\vspace{.1 in}

At this point we have considered every edge in $T$. The number of possible edge differences among all of the edges in $T$ equals
$$\prod_{t=1}^r \prod_{j=1}^{\ell(t)} \prod_{i=1}^{|L_j^{t}|} \phi_{i}(p^{k-a_t}).$$

In summary, we have specified the number of values that the differences of the vertices to each of the edges in our permissible tree can attain. Consequently, as soon as we specify the value of one vertex, in which there are $p^k$ possible choices, we have uniquely determined (by our work above) the values of the remaining vertices through their differences. Therefore, the number of possible diagonal matrices with the given valuation graph equals
$$p^k \cdot \prod_{t=1}^r \prod_{j=1}^{\ell(t)} \prod_{i=1}^{|L_j^{t}|} \phi_{i}(p^{k-a_t}).$$

\vspace{.1 in}

Finally, we note that permuting the order of the diagonal entries of any diagonal matrix associated with $G$ yields a valuation graph isomorphic to $G$. Since these correspond to the weighted graph automorphisms of $G$, dividing our last formula by $|\text{Aut}(G)|$ yields the desired enumeration formula.

\end{proof}

\noindent \textbf{Remark:} Note that the group of weighted automorphisms of $G$ is a subgroup of all automorphisms (under composition of isomorphisms) of the corresponding unweighted graph version of $G$. Since $G$ is a complete graph with $n$ vertices, we know that there are $|S_n| = n!$ unweighted graph automorphisms of $G$ (which can be represented by $n \times n$ permutation matrices). Then, Lagrange's Theorem for groups implies that $|\text{Aut}(G)| = \frac{n!}{\sigma(G)}$, where $\sigma(G) = [S_n : \text{Aut}(G)]$ denotes the number of vertex permutations yielding non-isomorphic valuation graphs from $G$. In this manner, one can determine alternatively find the value of $|\text{Aut}(G)|$ by directly computing $\sigma(G)$.


\vspace{.1 in}

So far, Theorem \ref{thm:linked} allows us to enumerate diagonal matrices with distinct diagonal entries with an associated valuation graph. The following proposition addresses how to extend this theorem to also enumerate diagonal matrices whose diagonal entries are not distinct.

\begin{prop} \label{thm:multiple}
Let $D \in M_n(\mathbb{Z}_{p^k})$ be a diagonal matrix with distinct diagonal entries $\lambda_1, \dots , \lambda_g$, and let $D' \in M_g(\mathbb{Z}_{p^k})$ be the corresponding diagonal matrix with (distinct) diagonal entries $\lambda_1, \dots , \lambda_g$. If $D$ has exactly $n_m$ distinct $m \times m$ diagonal blocks for each $m \in \{1, 2, \dots, g\}$, then $$t(T) = \frac{g!}{n_1! \dots n_g!} \cdot t(T'),$$ where $T$ and $T'$ are the types of $D$ and $D'$, respectively.
\end{prop}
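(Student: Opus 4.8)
The plan is to relate the two enumeration problems by a combinatorial bookkeeping argument that tracks how a diagonal matrix $D \in M_n(\mathbb{Z}_{p^k})$ with $g$ distinct eigenvalues arises from the ``skeleton'' data recorded by $D' \in M_g(\mathbb{Z}_{p^k})$ together with the choice of multiplicities. First I would make precise what the type $T$ of $D$ encodes: the partition $n = m_1 + \dots + m_g$ of the multiplicities, and the weighted graph structure $\{l_{ij}\}$ on the $g$ distinct values. Crucially, the second piece of data — the valuation graph on the $g$ vertices $\lambda_1, \dots, \lambda_g$ — is exactly the type $T'$ of $D'$, since $D'$ has those same $g$ entries each with multiplicity one. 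So passing from $D'$ to $D$ amounts to attaching a multiplicity $m_i \geq 1$ to each vertex, and $t(T)$ counts diagonal-matrix classes of type $T$, i.e.\ assignments of actual values in $\mathbb{Z}_{p^k}$ to the $g$ labelled-by-multiplicity vertices realizing the weighted graph, modulo reordering of the diagonal blocks.

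The key step is then a counting identity. Fix the underlying valuation graph isomorphism class (that of $T'$). Each diagonal-matrix class of type $T'$ is an \emph{unordered} tuple of $g$ distinct values realizing that graph; by Theorem~\ref{thm:linked} there are $t(T')$ of these. To build a matrix of type $T$, we must additionally decide which of the $g$ vertices gets which multiplicity. The multiplicities come in a fixed multiset determined by $T$: among $m_1, \dots, m_g$ there are $n_m$ copies of each value $m$ (so $n_1 + 2n_2 + \dots$ is just the count bookkeeping, and $\sum_m n_m = g$). Because a diagonal-matrix class is counted up to permutation of its diagonal entries, assigning multiplicities to the $g$ already-chosen distinct values — and then recording the result up to permutation of the resulting $n$ diagonal entries — is the same as choosing an ordered assignment of the multiset $\{m_1,\dots,m_g\}$ to the $g$ vertices, divided by the automorphisms of that multiset, namely $\frac{g!}{n_1!\cdots n_g!}$ choices. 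This is the step I expect to require the most care: one must check that the quotient by block-permutations on the $D$ side factors cleanly as (quotient by permutations of the $g$ distinct values, giving $t(T')$) times (distinct ways to distribute the multiplicities, giving the multinomial), with no overcounting or undercounting arising from an automorphism of the valuation graph that also happens to permute equal multiplicities. The cleanest way to handle this is to work with the ordered count $p^k \prod \phi_i(\cdots)$ of Theorem~\ref{thm:linked} before dividing by $|\mathrm{Aut}(G)|$: on labelled tuples the multiplicity assignment is literally independent, contributing a factor $\binom{g}{m_1,\dots,m_g}\big/(\text{repetitions})$, and then the $|\mathrm{Aut}(G)|$-quotient descends identically on both sides.

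So the steps, in order, are: (1) identify the valuation graph of $D$ with that of $D'$ and hence the ``graph part'' of $T$ with $T'$; (2) observe that a matrix of type $T$ is determined by a matrix of type $T'$ (the distinct values and their weighted graph) plus a function from the $g$ values to the multiset of multiplicities; (3) count those functions up to the symmetry already built into $t(\cdot)$, obtaining the multinomial coefficient $\frac{g!}{n_1!\cdots n_g!}$; (4) conclude $t(T) = \frac{g!}{n_1!\cdots n_g!}\, t(T')$. The main obstacle is purely the Step~(3) bijection — making airtight that ``up to permutation of the $n$ diagonal blocks'' on the $D$-side decomposes exactly into ``up to permutation of $g$ values'' times ``distinct multiplicity distributions,'' which is most safely done by lifting to ordered tuples where everything is a genuine product.
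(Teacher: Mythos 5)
Your proposal is correct and takes essentially the same approach as the paper: the paper's (much terser) proof likewise observes that a similarity class of type $T$ is just a class of type $T'$ together with a choice of how to distribute the multiset of block sizes among the $g$ distinct diagonal values, of which there are $\frac{g!}{n_1!\cdots n_g!}$. The only difference is that you explicitly raise and dispose of the potential interaction with $\mathrm{Aut}(G)$ (by counting, for each unordered $g$-set of values, the multiplicity assignments directly, or equivalently by lifting to ordered tuples), a point the paper's two-sentence proof passes over in silence.
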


\begin{proof} Since we know by hypothesis that $D$ and $D'$ share the same number of distinct diagonal entries, it suffices to count the number of ways to arrange the diagonal blocks (each of which is distinguished by a different scalar on their respective diagonals) in $D$. Since the number of ways of arranging these diagonal blocks in $D$ equals $\frac{g!}{n_1! \dots n_g!}$, the conclusion of this theorem is now an immediate consequence.
\end{proof}

Now that we have Theorem \ref{thm:linked} and Proposition \ref{thm:multiple} at our disposal, we are more than ready to enumerate the diagonalizable $n \times n$ matrices in the cases where $n = 3$ and $4$; this we address in the next two sections. Before doing this, we would like to put our theory of valuation graphs into perspective by giving an example that illustrates the theory we have developed for the valuation graph.

\vspace{.1 in}

\noindent \textbf{Example:} 
Consider the diagonal matrix $D \in M_6(\mathbb{Z}_{3^3})$ whose diagonal entries are 0, 1, 2, 4, 5, and 11. Then, its corresponding valuation graph $G$ is depicted in Figure 1 below. 



\begin{figure}[H]
\centering
\includegraphics[width = 2.3 in]{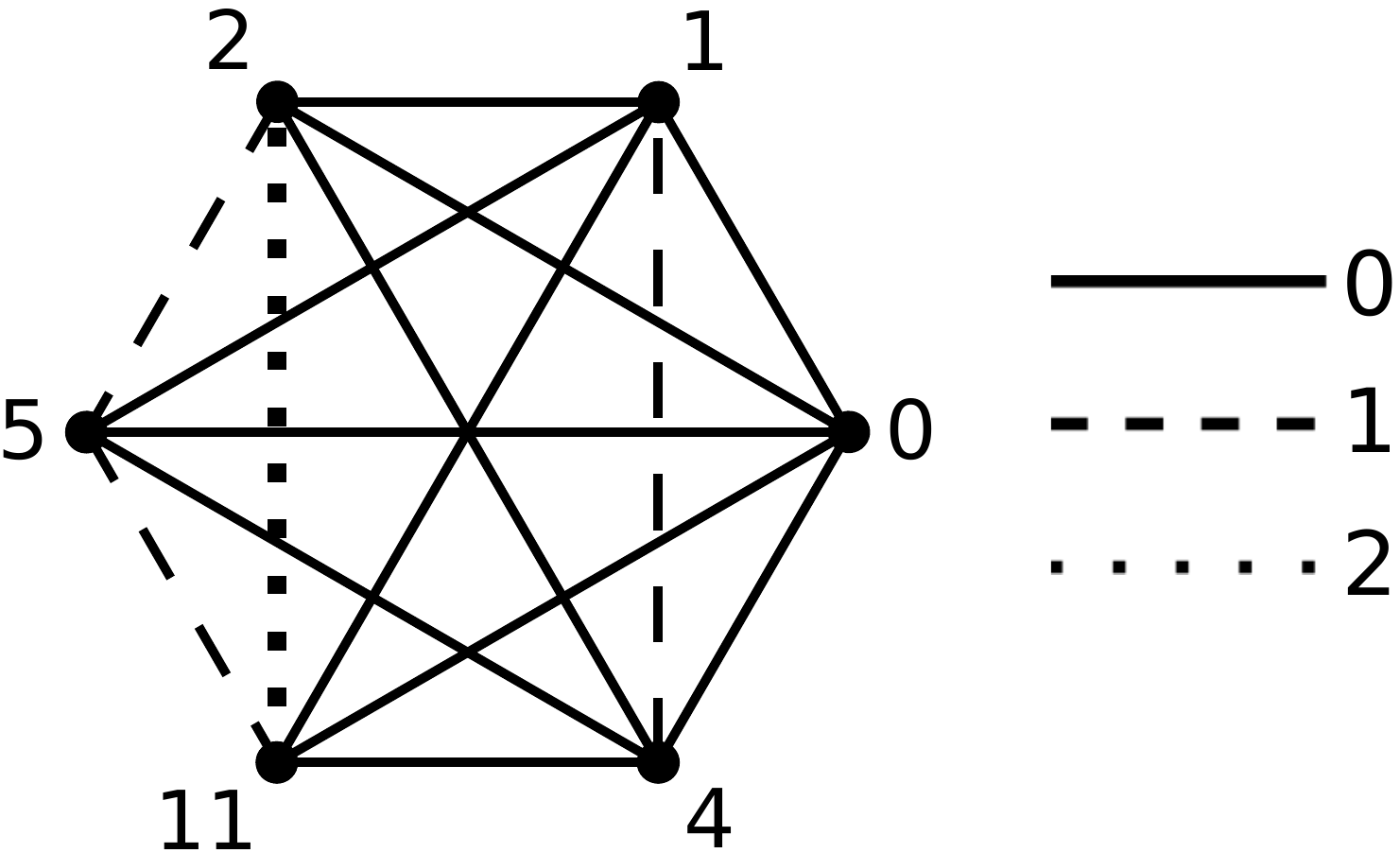}
\caption{The valuation graph $G$ corresponding to $D$.}
\end{figure}

\noindent Observe the number of distinct weights in $G$ is $3$, consistent with Lemma \ref{thm:number_of_weights}, and that the highest edge weight is $2$.

\vspace{.1 in}

Next, we give examples of permissible spanning trees for $G$ and partition their edges into linked cells. Figure 2 shows three permissible spanning trees $T_1,T_2,T_3$ for $G$ and their linked cells $L_1^1, L_1^2, L_2^2$, and $L_1^3$. 

\begin{figure}[H]
\centering
\includegraphics[width = 3 in]{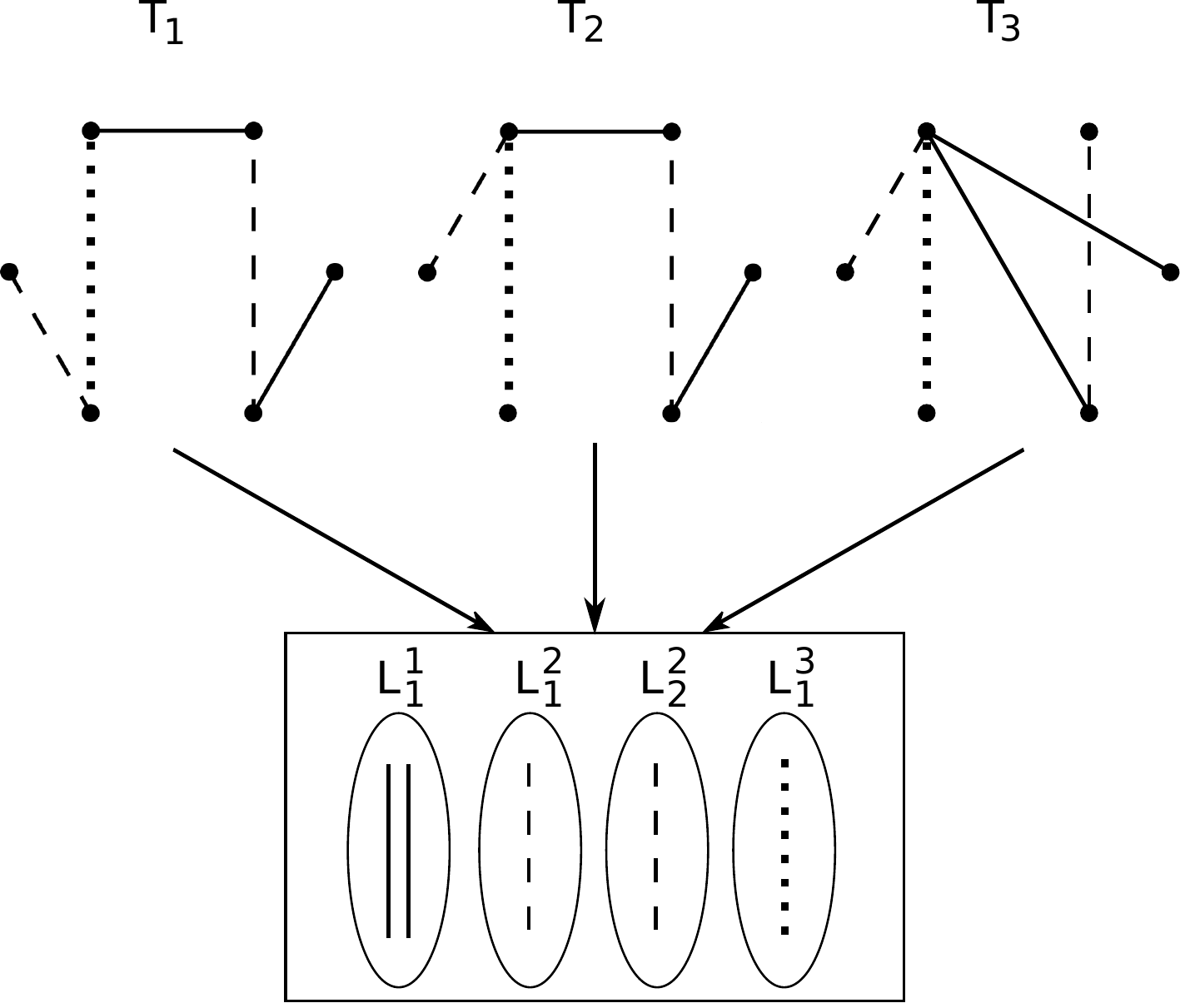}
\caption{Three permissible spanning trees for $G$ and their linked cells.}
\end{figure}

Although each of these spanning trees have different degrees, they all have the same edge decomposition into linked cells. Thus, we can use any of these permissible spanning trees to enumerate the number of similarity classes of diagonal matrices sharing $G$ as its valuation graph. To this end, it remains to compute $|\text{Aut}(G)|$. Since we can permute the vertices $2$ and $11$, as well as the vertices $1$ and $4$ without altering $G$, this implies that $|\text{Aut}(G)| = 2!\cdot2!$. Therefore by Theorem \ref{thm:linked}, the number of similarity classes of diagonal matrices with valuation graph $G$ equals

\begin{align*}
    \frac{3^3}{2! \cdot 2!} \cdot \prod_{t=0}^2 \prod_{j=1}^{\ell(t)} \prod_{i=1}^{|L_j^{t}|} \phi_{i}(3^{3-t})
    &= \frac{27}{4} \cdot\phi_1(3^3) \cdot \phi_2(3^3) \cdot \phi_1(3^2) \cdot \phi_1(3^2) \cdot \phi_1(3^1)\\
    &= 78732.
\end{align*}

\section{Enumerating the \texorpdfstring{$3 \times 3$}{TEXT} Diagonalizable Matrices}

\begin{theorem}
The number of $3 \times 3$ matrices with entries in $\mathbb{Z}_{p^k}$ that are diagonalizable over $\mathbb{Z}_{p^k}$ is
\begin{align*}
|\emph{Diag}_3(\mathbb{Z}_{p^k})| &= p^k + \frac{p^{k+2}(p^3-1)(p^{5k}-1)}{p^5 - 1} + \frac{p^{k+3}(p^3-1)(p-2)(p+1)(p^{8k}-1)}{6(p^8 - 1)}\\
&+ \frac{p^{k+3}(p^2-1)}{2}\Bigg( \frac{p^{8k}-p^8}{p^8-1} - \frac{p^{5k}-p^5}{p^5-1}\Bigg).
\end{align*}
\end{theorem}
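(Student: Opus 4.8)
The plan is to apply the master formula~(\ref{eq:2}), namely $|\text{Diag}_3(\mathbb{Z}_{p^k})| = \sum_{T \in \mathcal{T}(3)} t(T)\,|GL_3(\mathbb{Z}_{p^k})|/c(T)$, by systematically cataloguing all types of $3 \times 3$ diagonal matrices according to the partition of $3$. There are three partitions: $3$, $2+1$, and $1+1+1$. For the partition $3$, there is a single type consisting of scalar matrices: $t = p^k$, $c = |GL_3(\mathbb{Z}_{p^k})|$, contributing exactly $p^k$. For the partition $2+1$, the valuation graph has two vertices and a single weight $a \in \{0, 1, \dots, k-1\}$, giving $k$ types indexed by $a$; I would use Proposition~\ref{thm:centralizer} to get $c(T) = |GL_2(\mathbb{Z}_{p^k})| \cdot |GL_1(\mathbb{Z}_{p^k})| \cdot p^{2 \cdot 2 \cdot 1 \cdot a} = p^{4a}\,|GL_2(\mathbb{Z}_{p^k})|\,\phi(p^k)$, and combine Theorem~\ref{thm:linked} (for the underlying two distinct scalars, with $|\text{Aut}|=1$ since the multiplicities differ) with Proposition~\ref{thm:multiple} to count $t(T)$.

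For the partition $1+1+1$, the valuation graph $G$ is a triangle on three vertices, and by Lemma~\ref{thm:number_of_weights} it has at most two distinct weights, so by the Triangle Inequality (Prop.~\ref{thm:triangleinequality}) the weight multiset is either $\{a, a, a\}$ (all three edges equal) or $\{a, b, b\}$ with $a < b$ (the two larger weights equal). This yields two families: the ``equilateral'' types indexed by a single $a \in \{0, \dots, k-1\}$, and the ``isosceles'' types indexed by pairs $0 \le a < b \le k-1$. For each I would compute the three ingredients. The automorphism group: $|\text{Aut}(G)| = 3! = 6$ in the equilateral case and $|\text{Aut}(G)| = 2$ in the isosceles case (only the two vertices joined by the apex can swap). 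The permissible spanning tree and its linked cells: in the equilateral case the tree is two edges of weight $a$ that are linked, giving a factor $\phi_1(p^{k-a})\phi_2(p^{k-a})$; in the isosceles case the tree is one edge of weight $b$ and one edge of weight $a$, which lie in separate linked cells (different weights, and $a$ is not $\ge b$), giving $\phi_1(p^{k-b})\phi_1(p^{k-a})$. Plugging into Theorem~\ref{thm:linked} gives $t(T)$ directly (diagonal entries are distinct here, so no correction from Prop.~\ref{thm:multiple} is needed). The centralizer via Prop.~\ref{thm:centralizer}: with all $m_i = 1$, $c(T) = \phi(p^k)^3 \cdot \prod_{i<j} p^{2 l_{ij}}$, which is $p^{6a}\phi(p^k)^3$ for equilateral and $p^{2a+4b}\phi(p^k)^3$ for isosceles.

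Having tabulated $t(T)$ and $c(T)$ for all types, the remaining work is arithmetic: substitute $|GL_3(\mathbb{Z}_{p^k})| = p^{9(k-1)}(p^3-1)(p^3-p)(p^3-p^2)$ and $\phi(p^k) = p^{k-1}(p-1)$, simplify each of the four groups of terms, and evaluate the resulting single geometric sums (over $a$) and double geometric sum (over $a < b$). The single sums reduce via $\sum_{a=0}^{k-1} x^a = (x^k-1)/(x-1)$; the double sum $\sum_{0 \le a < b \le k-1} p^{\alpha a} p^{\beta b}$ I would handle by summing over $b$ first and then $a$, producing the difference-of-two-geometric-series shape visible in the last line of the claimed formula (the $\frac{p^{8k}-p^8}{p^8-1} - \frac{p^{5k}-p^5}{p^5-1}$ structure). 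I expect the main obstacle to be purely bookkeeping: getting every exponent right when collapsing the products $|GL_3|/c(T)$ (there are many cancelling powers of $p$), correctly identifying the linked-cell decomposition and automorphism group in the isosceles case, and carefully evaluating the double sum so that its lower limits match. No step is conceptually hard given Theorems~\ref{thm:linked} and Prop.~\ref{thm:centralizer}; the risk is an off-by-one in an exponent or a mis-evaluated geometric series, so I would cross-check the final formula against the $k=1$ specialization (where it should recover the finite-field count of $3 \times 3$ diagonalizable matrices over $\mathbb{F}_p$) and against a direct computer enumeration for small $p$ and $k$.
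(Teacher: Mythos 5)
Your overall strategy is exactly the paper's: the scalar type, the $k$ types for the partition $2+1$ (your $t$ and $c$ agree with the paper's $t(T_2^{(i)}) = p^k\phi(p^{k-i})$ and $c(T_2^{(i)}) = p^{4i}\,\phi(p^k)\,|GL_2(\mathbb{Z}_{p^k})|$), and the equilateral triangle type for $1+1+1$ all match. The genuine gap is in your classification of the non-equilateral triangle. The Triangle Inequality of Proposition \ref{thm:triangleinequality} forces the \emph{minimum} weight of a triangle to be attained at least twice: if $l_{12}$ were the strict minimum, applying the inequality at the opposite edge pair gives $l_{12} \ge \min\{l_{13}, l_{23}\}$, a contradiction. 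So the admissible multiset is $\{a,a,b\}$ with $a<b$, not $\{a,b,b\}$ --- the two \emph{smaller} weights coincide, not the two larger ones. (Concretely, $0$, $1$, $1+p$ have pairwise valuations $0,0,1$, whereas no three elements of $\mathbb{Z}_{p^k}$ can have pairwise valuations $0,1,1$.) Your isosceles centralizer $c(T)=p^{2a+4b}\phi(p^k)^3$ is therefore wrong; Proposition \ref{thm:centralizer} applied to the correct configuration gives $p^{2(a+a+b)}\phi(p^k)^3 = p^{4a+2b}\phi(p^k)^3$, which is the paper's $c(T_{3b}^{(i,j)}) = p^{4i+2j}\phi(p^k)^3$. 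Since the summand of the double sum is not symmetric in $a$ and $b$ (after simplification it is proportional to $p^{-5a-3b}$ rather than your $p^{-3a-5b}$), this changes the value of the double geometric sum, and you would not recover the stated formula.

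Two smaller remarks. First, your description of the isosceles spanning tree (one edge of weight $b$ and one of weight $a$, lying in separate linked cells) is the correct one for the multiset $\{a,a,b\}$ but is inconsistent with your own multiset $\{a,b,b\}$, for which the two weight-$b$ edges already span all three vertices and the permissible tree would consist of those two linked edges; fortuitously $t(T) = \tfrac{p^k}{2}\phi(p^{k-a})\phi(p^{k-b})$ is symmetric in $a,b$ and agrees with the paper either way, so the error is confined to $c(T)$. Second, your proposed $k=1$ sanity check would not detect this mistake, since the isosceles types require $b\ge 1$ and hence $k\ge 2$; only a direct enumeration with $k\ge 2$ would catch it.
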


\begin{proof}
We first enumerate all of the $3 \times 3$ diagonal matrix types. There are three partitions of $3$, namely $3$, $2+1$, and $1+1+1$. The trivial partition yields the type of scalar matrices 
$$T_1 = \left \{ \begin{pmatrix}
\lambda &&\\
& \lambda&\\
&& \lambda\\
\end{pmatrix} \; : \; \lambda \in \mathbb{Z}_{p^k} \right\}.$$

\noindent As with the type of $2 \times 2$ scalar diagonal matrices, we have $t(T_1) = p^k$ and $c(T_1) = |GL_3(\mathbb{Z}_{p^k})|$.

\vspace{.1 in}
 
The partition $3 = 2+1$ comprises $k$ distinct types as $i \in \{0, 1, \dots , k-1\}$:
$$T_2^{(i)} = \left\{\begin{pmatrix} \lambda_1 &&\\ & \lambda_1&\\ && \lambda_2\\ \end{pmatrix} \; : \; p^i \; || \; (\lambda_1-\lambda_2) \right\}.$$

\noindent Proposition \ref{thm:multiple} relates these types to the non-scalar types of $2 \times 2$ diagonal matrices, and thus $$t(T_2^{(i)}) = \frac{2!}{1!1!} \cdot \frac{p^k \phi(p^{k-i})}{2!} = p^k \phi(p^{k-i}).$$

\noindent Next, Proposition \ref{thm:centralizer}  gives us
$c(T_2^{(i)}) = \phi(p^k) \cdot \vert GL_2(\mathbb{Z}_{p^k}) \vert \cdot p^{4i}$.

\vspace{.1 in}

Finally, the partition $3=1+1+1$ comprises two distinct classes of diagonal matrix types that we concisely give by their respective valuation graphs in the figure below:

\begin{figure}[H]
\centering
\includegraphics[width = 3
in]{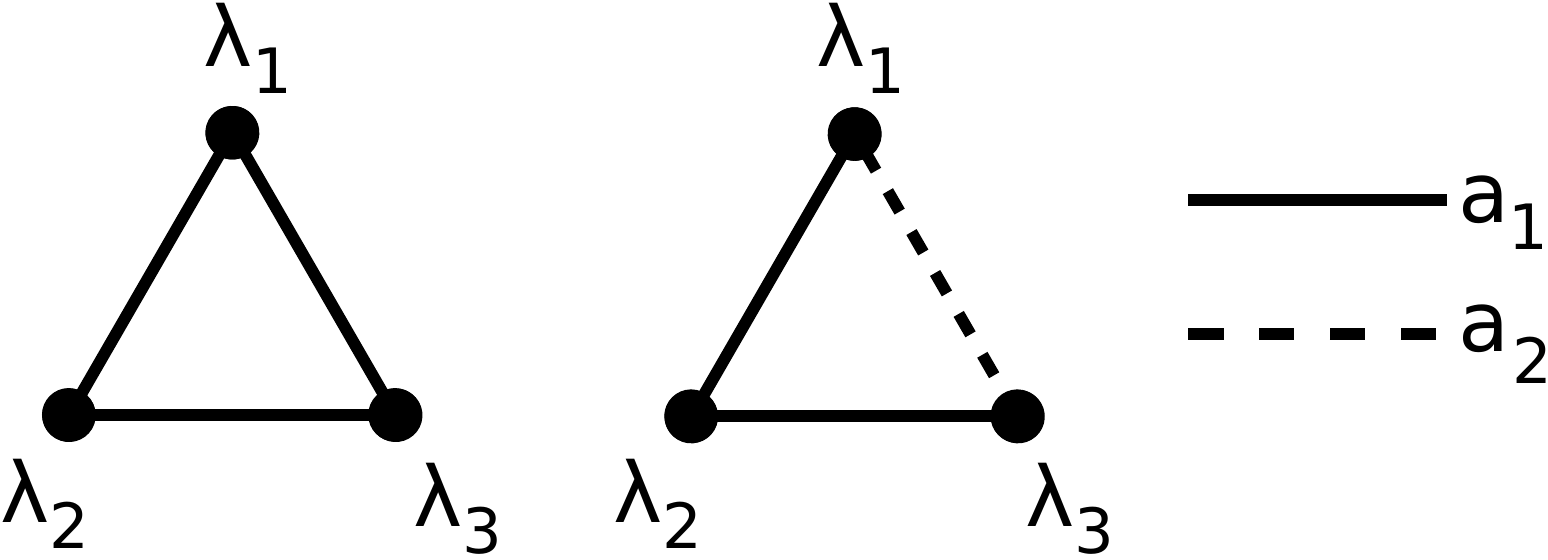}
\caption{Two valuation graph classes in the $3 \times 3$ case.}
\end{figure}

For the first valuation graph, let $i \in \{0, 1, \dots, k-1\}$ denote the common weight of the three edges on the first valuation graph given above. Letting $T_{3a}^{(i)}$ denote this type,  Theorem \ref{thm:linked} yields $t(T_{3a}^{(i)})= \displaystyle \frac{p^k \phi (p^{k-i}) \phi_2(p^{k-i})}{3!}$, and Proposition \ref{thm:centralizer} gives us $c(T_{3a}^{(i)}) = \phi (p^k)^3 p^{6i}$. 

\vspace{.1 in}

For the second valuation graph, let $i$ and $j$ denote the weights in the second valuation graph given above; note that $i \in \{0, \dots, k-2\}$ and $j \in \{i+1, \dots, k-1\}$. Letting $T_{3b}^{(i,j)}$ denote this type, Theorem \ref{thm:linked}, gives us $t(T_{3b}^{(i,j)}) = \displaystyle \frac{p^k \phi (p^{k-i})\phi (p^{k-j})}{2!}$, and Proposition \ref{thm:centralizer} yields $c(T_{3b}^{(i, j)}) = \phi (p^k)^3 p^{4i + 2j}$. 


\vspace{.1 in}

Finally, we use (\ref{eq:2}) to enumerate the $3 \times 3$ diagonal matrices and conclude that
\begin{align*}
\vert\text{Diag}_3(\mathbb{Z}_{p^k})\vert &= 
p^k + \frac{p^{k+2}(p^3-1)(p^{5k}-1)}{p^5 - 1} + \frac{p^{k+3}(p^3-1)(p-2)(p+1)(p^{8k}-1)}{6(p^8 - 1)} \\
&+ \; \frac{p^{k+3}(p^2-1)}{2}\Bigg( \frac{p^{8k}-p^8}{p^8-1} - \frac{p^{5k}-p^5}{p^5-1}\Bigg).
\end{align*}
\end{proof}

\section{Enumerating the \texorpdfstring{$4 \times 4$}{TEXT} Diagonalizable Matrices}







We first address the $4 \times 4$ diagonal matrices with repeated diagonal entries. By using Propositions \ref{thm:centralizer} and \ref{thm:multiple}, we obtain the results in the following tables. Table 1 deals with the cases where there are at most two distinct diagonal entries.

\begin{table}[ht]
\centering
\begin{tabular}{|c|Sc|c|c|}
\hline   
Type $T$ & Valuation Graph & $t(T)$ & $c(T)$\\
\hline 
$\begin{pmatrix}
\lambda &&&\\
& \lambda&&\\
&& \lambda&\\
&&& \lambda\\
\end{pmatrix}$ & \includegraphics[width = 0.7 in, valign=c] {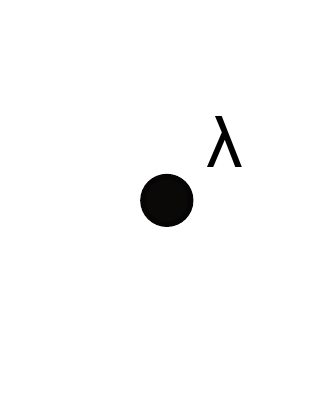} & $p^k$ & $|GL_4(\mathbb{Z}_{p^k})|$ \\

\hline 

$\begin{pmatrix} \lambda_1 &&&\\ & \lambda_1 &&\\
&& \lambda_1 &\\ &&& \lambda_2\\ \end{pmatrix}$  & \includegraphics[width = 1.0 in, valign=c] {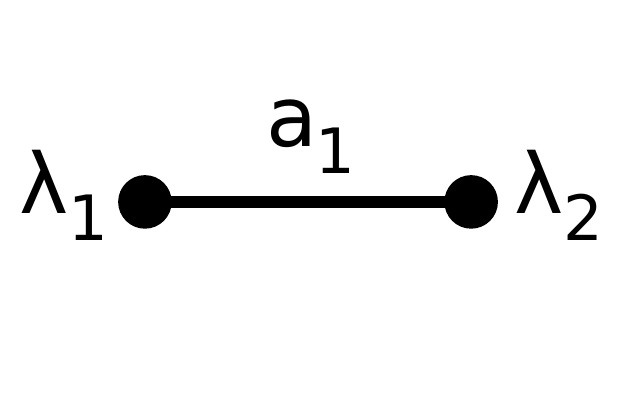} & $p^k \phi(p^{k-i})$ & $ p^{6i} \phi(p^k) \, |GL_3(\mathbb{Z}_{p^k})|$ \\

\hline 

$\begin{pmatrix}
\lambda_1 &&&\\
& \lambda_1 &&\\
&& \lambda_2  &\\
&&& \lambda_2\\
\end{pmatrix}$ & \includegraphics[width = 1.0 in, valign=c] {k2.pdf} & $\displaystyle\frac{p^k \phi(p^{k-i})}{2}$ & $p^{8i} \, |GL_2(\mathbb{Z}_{p^k})|^2 $ \\

\hline    
\end{tabular}
\caption{$4 \times 4$ diagonal matrix types with at most two distinct diagonal entries.}
\end{table}

In Table 2, we consider the more involved case where a given diagonal matrix has three distinct diagonal entries. 
\begin{table}[ht]
\centering
\begin{tabular}{|c|Sc|c|c|}
\hline   
Type $T$ & Valuation Graph & $t(T)$ & $c(T)$\\
\hline 

$\begin{pmatrix}
\lambda_1 &&&\\
& \lambda_1 &&\\
&& \lambda_2 &\\
&&& \lambda_3\\
\end{pmatrix}$ & 
\includegraphics[width = 1.0in, valign=c] {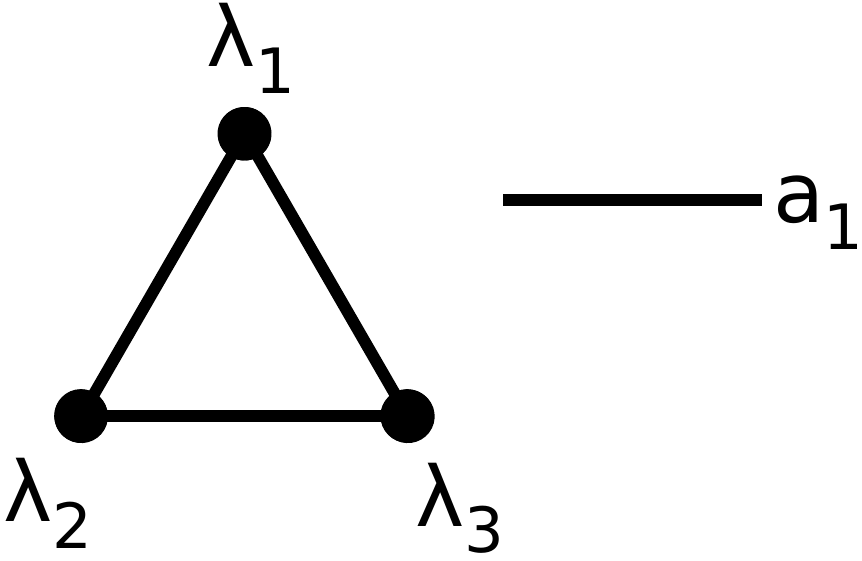}
& $\displaystyle\frac{p^k \phi(p^{k-i}) \phi_2(p^{k-i})}{2}$ & $p^{10i} \phi(p^k)^2 \, |GL_2(\mathbb{Z}_{p^k})|$ \\

\hline 

$\begin{pmatrix}
\lambda_1 &&&\\
& \lambda_1 &&\\
&& \lambda_2 &\\
&&& \lambda_3\\
\end{pmatrix}$ & 
\includegraphics[width = 1.0in, valign=c] {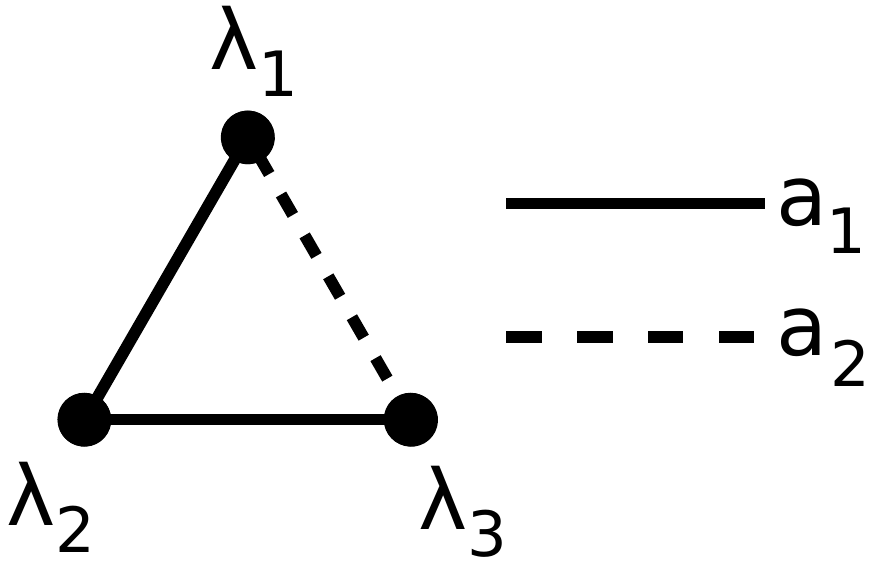}
&$\displaystyle\frac{3p^k \phi(p^{k-i}) \phi(p^{k-j})}{2}$ & $p^{6i+4j} \phi(p^k)^2 \, |GL_2(\mathbb{Z}_{p^k})|$ \\

\hline

$\begin{pmatrix}
\lambda_1 &&&\\
& \lambda_1 &&\\
&& \lambda_2 &\\
&&& \lambda_3\\ 
\end{pmatrix}$ & 
\includegraphics[width = 1.0in, valign=c] {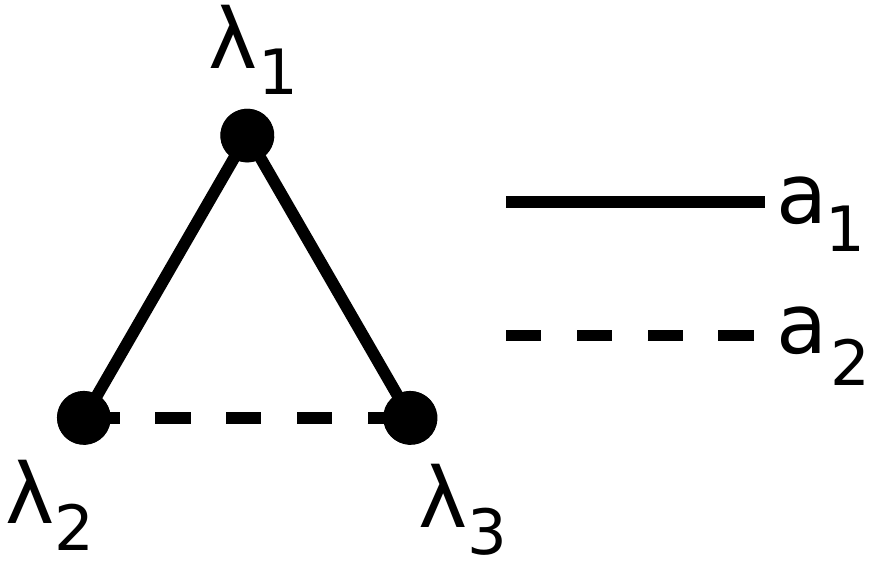}
&$\displaystyle\frac{3p^k \phi(p^{k-i}) \phi(p^{k-j})}{2}$ & $p^{8i+2j} \phi(p^k)^2 \, |GL_2(\mathbb{Z}_{p^k})|$ \\

\hline    
\end{tabular}
\caption{$4 \times 4$ diagonal matrix types with three distinct diagonal entries.}
\end{table}

It remains to enumerate the diagonal matrix types where the diagonal entries are distinct. By inspection, we find that there are 6 distinct classes of valuation graphs if we disregard the actual weights of their edges. We summarize the pertinent information for each of these six valuation graphs in Table 3.






\begin{table}[H]
\centering
\begin{tabular}{|Sc|c|c|}
\hline   
Valuation Graph & $t(T)$ & $c(T)$\\
\hline 

\includegraphics[width = 1.2in, valign=c] {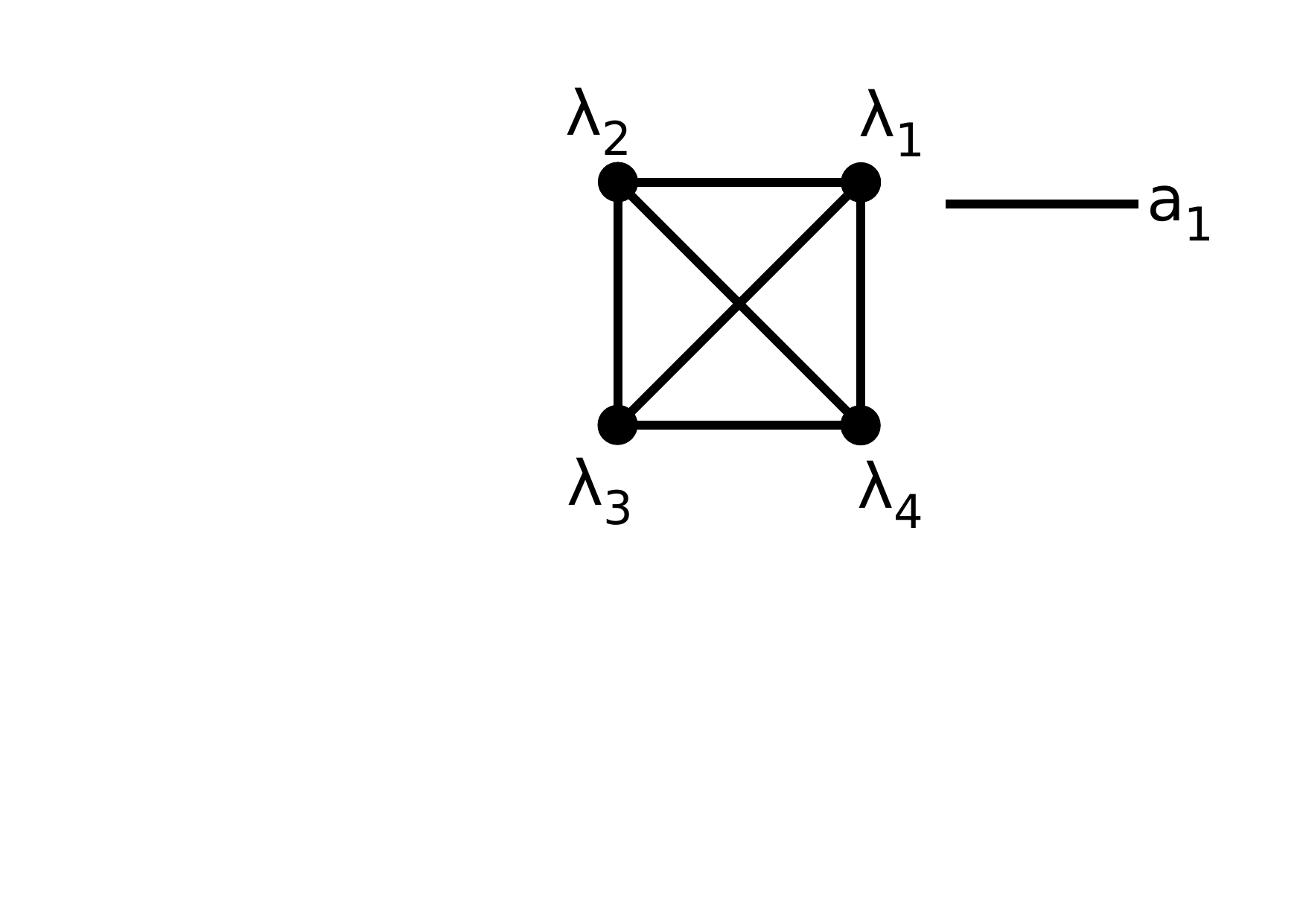}
& $\displaystyle\frac{p^k \phi(p^{k-i}) \phi_2(p^{k-i}) \phi_3(p^{k-i})}{4!}$ & $p^{12i} \phi(p^k)^4$\\

\hline 

\includegraphics[width = 1.2in, valign=c] {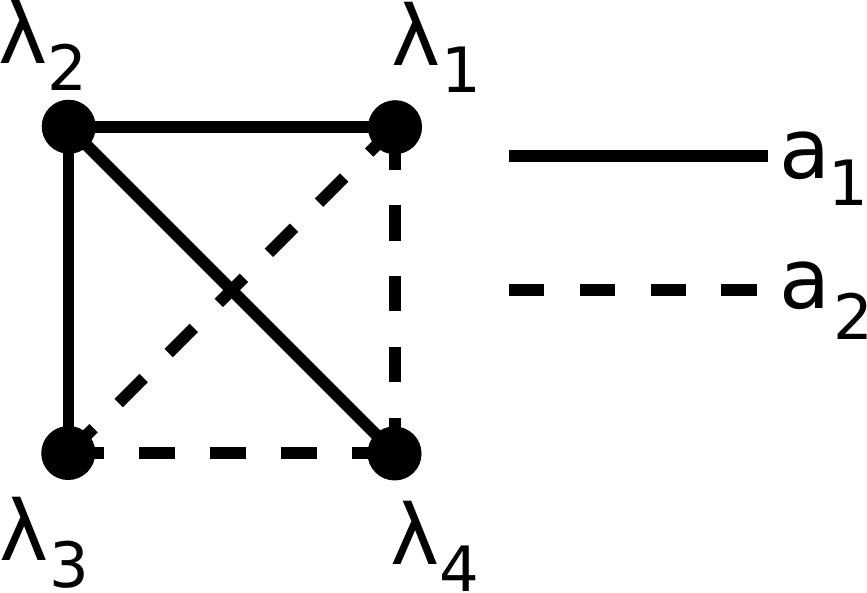}
&$\displaystyle\binom{4}{3}\frac{p^k \phi(p^{k-i}) \phi(p^{k-j}) \phi_2(p^{k-j})}{4!}$ & $p^{6i+6j} \phi(p^k)^4$\\

\hline

\includegraphics[width = 1.2in, valign=c] {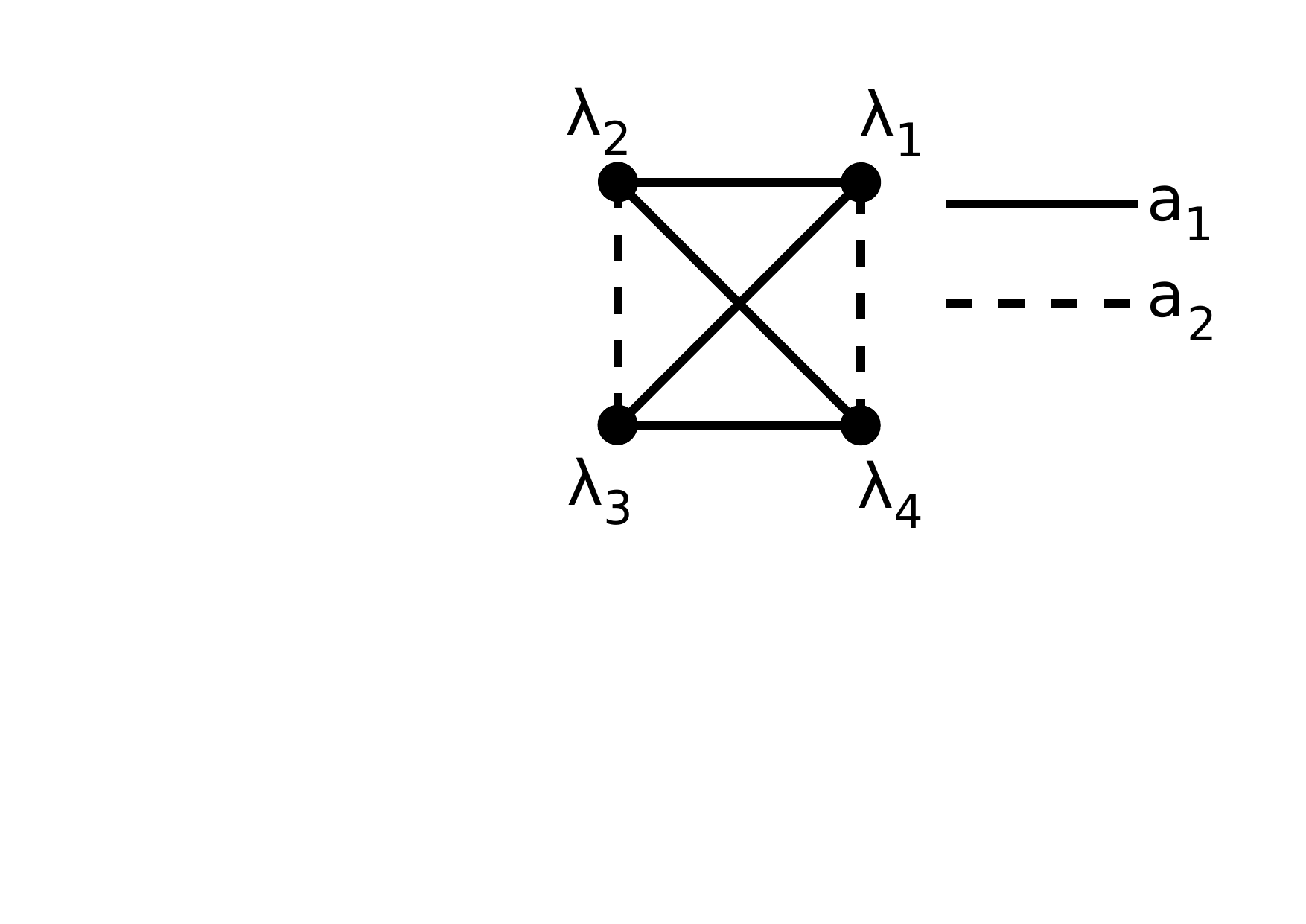}
&$\displaystyle\frac{1}{2}\binom{4}{2} \frac{p^k \phi(p^{k-i}) \phi(p^{k-j})^2}{4!}$ & $p^{8i+4j} \phi(p^k)^4$\\

\hline    

\includegraphics[width = 1.2in, valign=c] {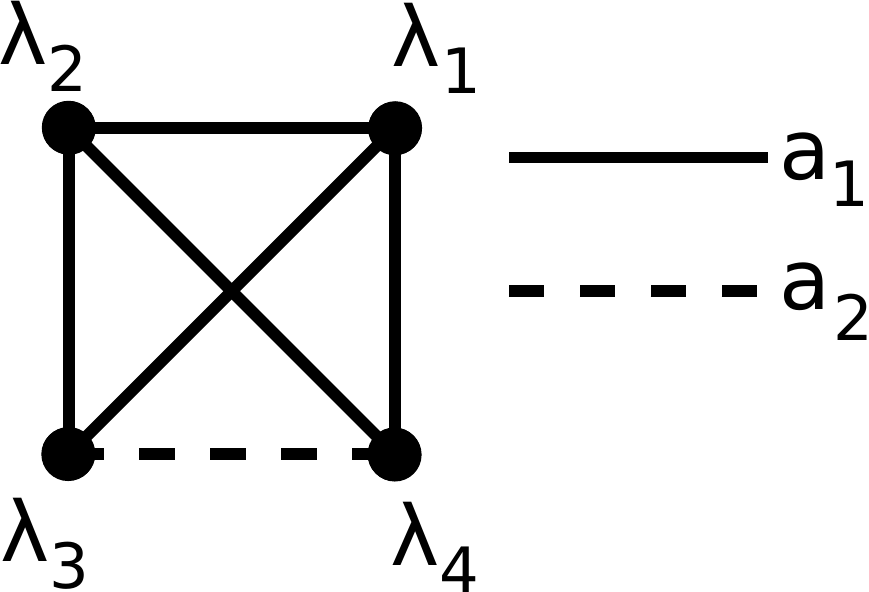}
&$\displaystyle\binom{4}{2} \frac{p^k \phi(p^{k-i}) \phi_2(p^{k-i}) \phi(p^{k-j})}{4!}$ & $p^{10i+2j} \phi(p^k)^4$\\

\hline    

\includegraphics[width = 1.2in, valign=c] {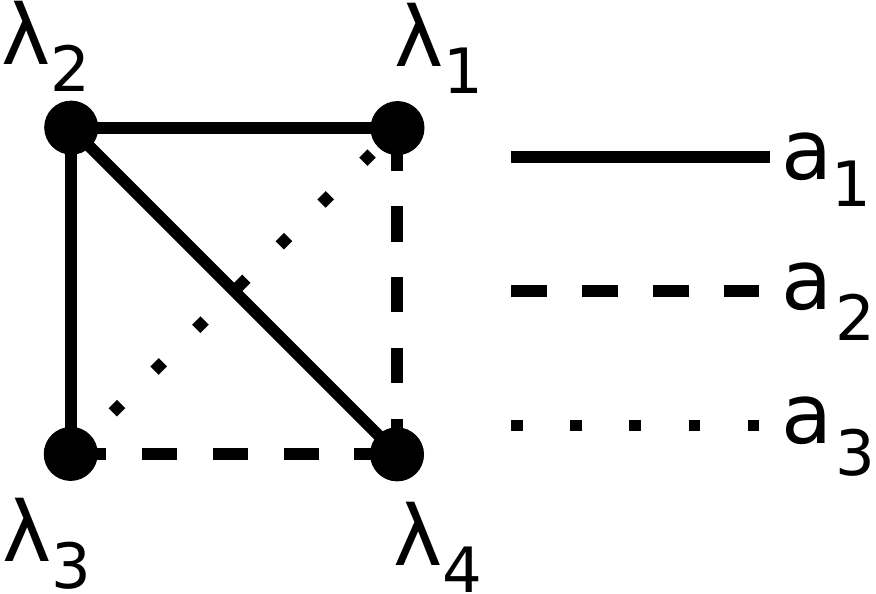}
&$\displaystyle\binom{4}{3} \binom{3}{1} \frac{p^k \phi(p^{k-i}) \phi(p^{k-j}) \phi(p^{k-m})}{4!}$ & $p^{6i+4j+2m} \phi(p^k)^4$\\

\hline    

\includegraphics[width = 1.2in, valign=c] {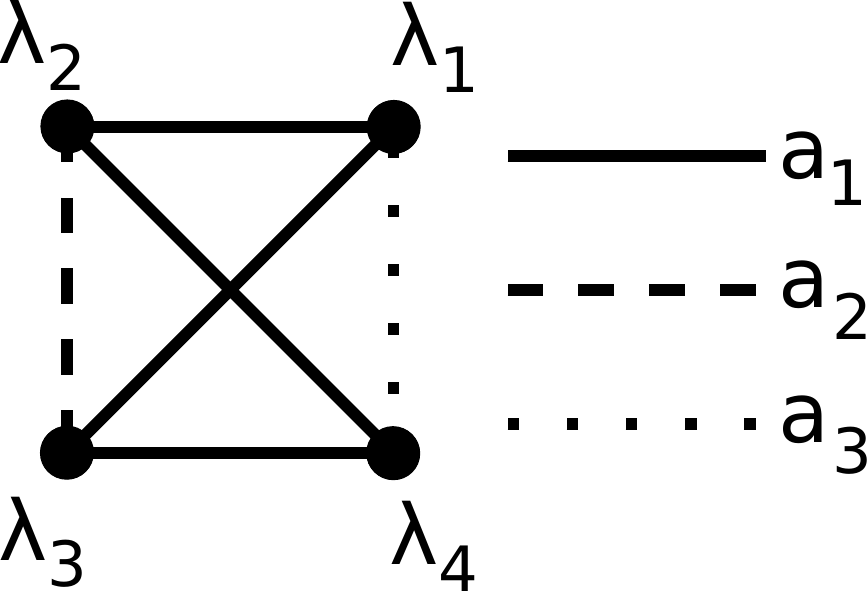}
&$\displaystyle\binom{4}{4} \binom{4}{2} \frac{p^k \phi(p^{k-i}) \phi(p^{k-j}) \phi(p^{k-m})}{4!}$ & $p^{8i+2j+2m} \phi(p^k)^4$\\

\hline    
\end{tabular}
\caption{$4 \times 4$ diagonal matrix types with distinct diagonal entries.}  
\end{table}

By using (\ref{eq:2}), one can now find the number of $4 \times 4$ diagonalizable matrices over $\mathbb{Z}_{p^k}$. In light of the many cases from the three tables above, the final formula will be quite long and messy to explicitly write out, and we therefore have chosen not to include it here (although the curious reader should have no problem constructing it if necessary).










\section{The Proportion of Diagonalizable Matrices Over \texorpdfstring{$\mathbb{Z}_{p^k}$}{TEXT}}

Kaylor \cite{Kaylor} noted that as the size of the field $\mathbb{F}_q$ increases, the proportion of matrices in
$M_n(\mathbb{F}_q)$ with all eigenvalues in $\mathbb{F}_q$ approaches $\frac{1}{n!}$; that is,
$$\lim_{q \to \infty} \frac{|\text{Eig}_n(\mathbb{F}_q)|}{|M_n(\mathbb{F}_q)|} = \frac{1}{n!}.$$

\noindent In particular, the work in \cite{Kaylor} also implies that as the size of $\mathbb{F}_q$ increases, the proportion of matrices in $M_n(\mathbb{F}_q)$ that are diagonalizable over $\mathbb{F}_q$ approaches $\frac{1}{n!}$ as well. We generalize this latter result by replacing $\mathbb{F}_q$ in the case of $q = p$ with $\mathbb{Z}_{p^k}$.

\begin{theorem}
Fix positive integers $n$ and $k$, and let $p$ be a prime number. Then,
$$\lim_{p \to \infty} \frac{|\emph{Diag}_n(\mathbb{Z}_{p^k})|}{|M_n(\mathbb{Z}_{p^k})|} = \frac{1}{n!}.$$
\end{theorem}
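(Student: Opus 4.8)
The plan is to show that $|M_n(\mathbb{Z}_{p^k})| = p^{kn^2}$ is the dominant term and that $|\text{Diag}_n(\mathbb{Z}_{p^k})|$ is asymptotic to $\frac{1}{n!} p^{kn^2}$ as $p \to \infty$ with $n, k$ fixed. Since $|M_n(\mathbb{Z}_{p^k})| = (p^k)^{n^2} = p^{kn^2}$ exactly, it suffices to prove $|\text{Diag}_n(\mathbb{Z}_{p^k})| = \frac{1}{n!} p^{kn^2} + O(p^{kn^2 - 1})$, where the implied constant may depend on $n$ and $k$.

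First I would isolate the contribution coming from diagonal matrices with $n$ distinct diagonal entries, all pairwise differences being units (i.e., the valuation graph with all weights equal to $0$, the single linked cell of weight $a_1 = 0$). For this type $T$, Theorem \ref{thm:linked} gives $t(T) = \frac{p^k}{n!}\prod_{i=1}^{n-1}\phi_i(p^k) = \frac{p^k}{n!}\prod_{i=1}^{n-1}(p^k - ip^{k-1})$, and Proposition \ref{thm:centralizer} gives $c(T) = \prod_{i=1}^n |GL_1(\mathbb{Z}_{p^k})| = \phi(p^k)^n = (p^k - p^{k-1})^n$ since all $l_{ij} = 0$. Using the Lemma for $|GL_n(\mathbb{Z}_{p^k})| = p^{n^2(k-1)}\prod_{l=1}^n(p^n - p^{l-1})$, the summand $t(T)\,\frac{|GL_n(\mathbb{Z}_{p^k})|}{c(T)}$ works out to $\frac{1}{n!}$ times an explicit rational function of $p$ (with $k$ as a parameter) whose leading term is $p^{kn^2}$; a direct degree count confirms this: $t(T)$ has degree $k + (n-1)k = nk$ in $p$, $|GL_n(\mathbb{Z}_{p^k})|$ has degree $n^2(k-1) + n^2 = n^2 k$, and $c(T)$ has degree $nk$, so the quotient has degree $nk + n^2 k - nk = n^2 k$, with leading coefficient $\frac{1}{n!}$.

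Next I would argue that every other type contributes only $O(p^{kn^2 - 1})$. The cleanest route is a crude uniform bound: for any type $T$ of a diagonal matrix with $g \le n$ distinct entries, $|S(D)| = \frac{|GL_n(\mathbb{Z}_{p^k})|}{|C(D)|} \le \frac{|GL_n(\mathbb{Z}_{p^k})|}{\prod_i |GL_{m_i}(\mathbb{Z}_{p^k})|}$, and this is bounded above by $p^{kn^2 - k\sum_i m_i^2}$ times an absolute constant (comparing degrees in $p$), which is at most $p^{kn^2 - kn}$ unless $g = n$; meanwhile the number of diagonal matrices of any fixed type is at most $p^{kn}$ (there are at most $p^{kn}$ diagonal matrices total), and the number of types in $\mathcal{T}(n)$ is bounded by a constant $c(n,k)$ depending only on $n$ and $k$ (finitely many partitions of $n$, finitely many weight assignments since weights lie in $\{0,\dots,k-1\}$). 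So the total contribution of all types with $g < n$ is $O(p^{kn^2 - kn + kn}) = O(p^{kn^2})$ — which is not yet good enough, so the bound must be sharpened. The fix is to keep the $p^k$-versus-$\frac{p^k}{n!}\prod\phi_i$ distinction and the $|C(D)|$ lower bound more carefully: when $g < n$ some $m_i \ge 2$, forcing an extra factor of $p^{k(m_i^2 - m_i)} \ge p^k$ in $|C(D)|$ relative to the all-distinct case, which lowers the degree of the corresponding summand by at least... actually by comparing to the leading type, one shows each non-leading summand has $p$-degree at most $n^2 k - 1$. Similarly, for $g = n$ but with some weight $l_{ij} \ge 1$, Proposition \ref{thm:centralizer} inserts a factor $p^{2m_im_j l_{ij}} = p^{2 l_{ij}} \ge p^2$ into $|C(D)|$, again dropping the degree.

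The main obstacle I anticipate is making the degree bookkeeping in the previous paragraph fully rigorous and uniform across all types simultaneously — i.e., proving the clean inequality ``$p$-degree of $t(T)\frac{|GL_n|}{c(T)}$ is $\le n^2 k - 1$ for every non-leading type $T$'' without case-chasing through partitions and valuation graphs. I would handle this by writing, for a type with distinct-entry multiplicities $m_1,\dots,m_g$ and weights $\{l_{ij}\}$, the exact $p$-degree of $t(T)$ (from Theorem \ref{thm:linked}, at most $k + \sum_{t}\sum_j |L_j^t|(k - a_t) \le nk$, with equality only when $g=n$ and all $a_t = 0$) against the exact $p$-degree of $\frac{|GL_n|}{c(T)}$, namely $n^2 k - \sum_i m_i^2 k - 2\sum_{i<j} m_i m_j l_{ij}$; summing, the total degree is $\le nk + n^2 k - k\sum m_i^2 - 2\sum m_i m_j l_{ij}$, and since $\sum m_i^2 \ge n$ (as $\sum m_i = n$) with equality iff all $m_i = 1$, and $\sum m_i m_j l_{ij} \ge 0$ with equality iff all weights vanish, the bound $n^2 k$ is attained only by the leading type and is otherwise at most $n^2 k - \min(k, 2) \le n^2 k - 1$. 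Dividing the whole count by $|M_n(\mathbb{Z}_{p^k})| = p^{n^2 k}$ and letting $p \to \infty$ then leaves exactly $\frac{1}{n!}$, completing the argument.
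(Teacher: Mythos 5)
Your proposal is correct and follows essentially the same route as the paper: write the count as a sum over finitely many types via equation (\ref{eq:2}), do degree bookkeeping in $p$ using $\deg t(T) \le k + (n-1)k = nk$ (from Theorem \ref{thm:linked}) against $\deg c(T) \ge k\sum_i m_i^2 + 2\sum_{i<j} m_i m_j l_{ij} \ge kn$ (from Proposition \ref{thm:centralizer}), and observe that equality forces the type with distinct diagonal entries and unit differences, whose contribution has leading coefficient $\tfrac{1}{n!}\,p^{kn^2}$. Your refined degree comparison (after discarding the initial too-crude bound) is exactly the paper's argument, just with the leading type's contribution computed more explicitly.
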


\begin{proof}
Letting $i$ index the distinct types of diagonal matrices, we let $T_{n,i}$ denote the $i$-th distinct type of a diagonal matrix in $M_n(\mathbb{Z}_{p^k})$. Note that we can view $|\text{Diag}_n(\mathbb{Z}_{p^k})|$ as a polynomial in powers of $p$. Since we are taking a limit as $p \to \infty$, it suffices to determine which diagonal matrix types contributes to the leading term of $|\text{Diag}_n(\mathbb{Z}_{p^k})|$. We accomplish this by first computing its degree. 
\begin{align*}
\deg{|\text{Diag}_n(\mathbb{Z}_{p^k})|} &= \deg \Big(\sum_{i = 1}^{|\mathcal{T}(n)|} t(T_{n,i}) s(T_{n,i})\Big)\\
&= \max_{1 \leq i \leq |\mathcal{T}(n)|} \deg\Big(t(T_{n,i}) s(T_{n,i})\Big)\\
&= \max_{1 \leq i \leq |\mathcal{T}(n)|} \deg \Big( t(T_{n,i}) \frac{|GL_n(\mathbb{Z}_{p^k})|}{c(T_{n,i})}\Big)\\
&= \max_{1 \leq i \leq |\mathcal{T}(n)|} (\deg{|GL_n(\mathbb{Z}_{p^k})|} + \deg{t(T_{n,i})} - \deg{c(T_{n,i})})\\
&= \max_{1 \leq i \leq |\mathcal{T}(n)|} (kn^2 + \deg{t(T_{n,i})} - \deg{c(T_{n,i})}).
\end{align*}

\noindent By Proposition \ref{thm:centralizer}, we find that
\begin{align*}
\deg c(T_{u,i}) &= \sum_{i = 1}^r \deg |GL_{m_i}(\mathbb{Z}_{p^k})| + \sum_{1 \leq i < j \leq k} \deg{p^{2m_im_jl_{ij}}}\\ 
&= k \sum_{i=1}^r m_i^2 + \sum_{1 \leq i < j \leq k} 2 m_i m_j l_{ij}\\
&\geq  k \sum_{i=1}^r m_i^2 \text{, since each } l_{ij} \geq 0\\
&\geq kn \text{, since each } m_i \geq 1. 
\end{align*}

\noindent Moreover, Theorem \ref{thm:linked} yields
\begin{align*}
\deg t(T_{n,i}) &= k + \sum_{t=1}^r \sum_{j=1}^{J_t} \sum_{i=1}^{|L_j^{(a_t)}|} (k - a_t)\\
&\leq k + \sum_{t=1}^r \sum_{j=1}^{J_t} k |L_j^{(a_t)}|\\
&= k + k(n-1)\\
&= kn.
\end{align*}

\noindent Therefore $\deg{t(T_{n,i})} \leq \deg{c(T_{n,i})}$, with equality occurring if and only if the diagonal matrix type in which its diagonal entries are distinct and their differences are units in $\mathbb{Z}_{p^k}$. Hence, $\deg{\vert \text{Diag}_n (\mathbb{Z}_{p^k})\vert} = kn^2$, and using the aforementioned diagonal matrix type, the leading coefficient of $|\text{Diag}_n(\mathbb{Z}_{p^k})|$ equals $\frac{1}{n!}$ by Theorem \ref{thm:linked}. Thus, we have 
$$\frac{|\text{Diag}_n(\mathbb{Z}_{p^k})|}{|M_n(\mathbb{Z}_{p^k})|} = \frac{\frac{1}{n!} \, p^{kn^2} + O(p^{kn^2 - 1})}{p^{kn^2}}.$$ 
The desired limit immediately follows by letting $p \to \infty$.
\end{proof}

\section{Future Research}

As we have seen, given a ring of the form $\mathbb{Z}_{p^k}$ and a positive integer $n$, we have given a procedure to compute $|\text{Diag}_n(\mathbb{Z}_{p^k})|$. The main difficulty that remains is enumerating the possible valuation graph classes (up to automorphism and disregarding the actual values of the weights) corresponding to $n \times n$ diagonal matrices. As demonstrated in the previous sections, it suffices to enumerate such classes corresponding to $n \times n$ diagonal matrices with distinct diagonal entries; let $a_n$ denote this quantity. We have seen that $a_2 = 3$ and $a_4 = 6$, and it turns out that $a_5 = 20$ (see Figure 4 below for these classes).  It would be of interest to find at least a recursive formula that determines $a_n$ for a given value of $n$. 

\begin{figure}[ht]
\begin{center}
\includegraphics[width = 5.5in]{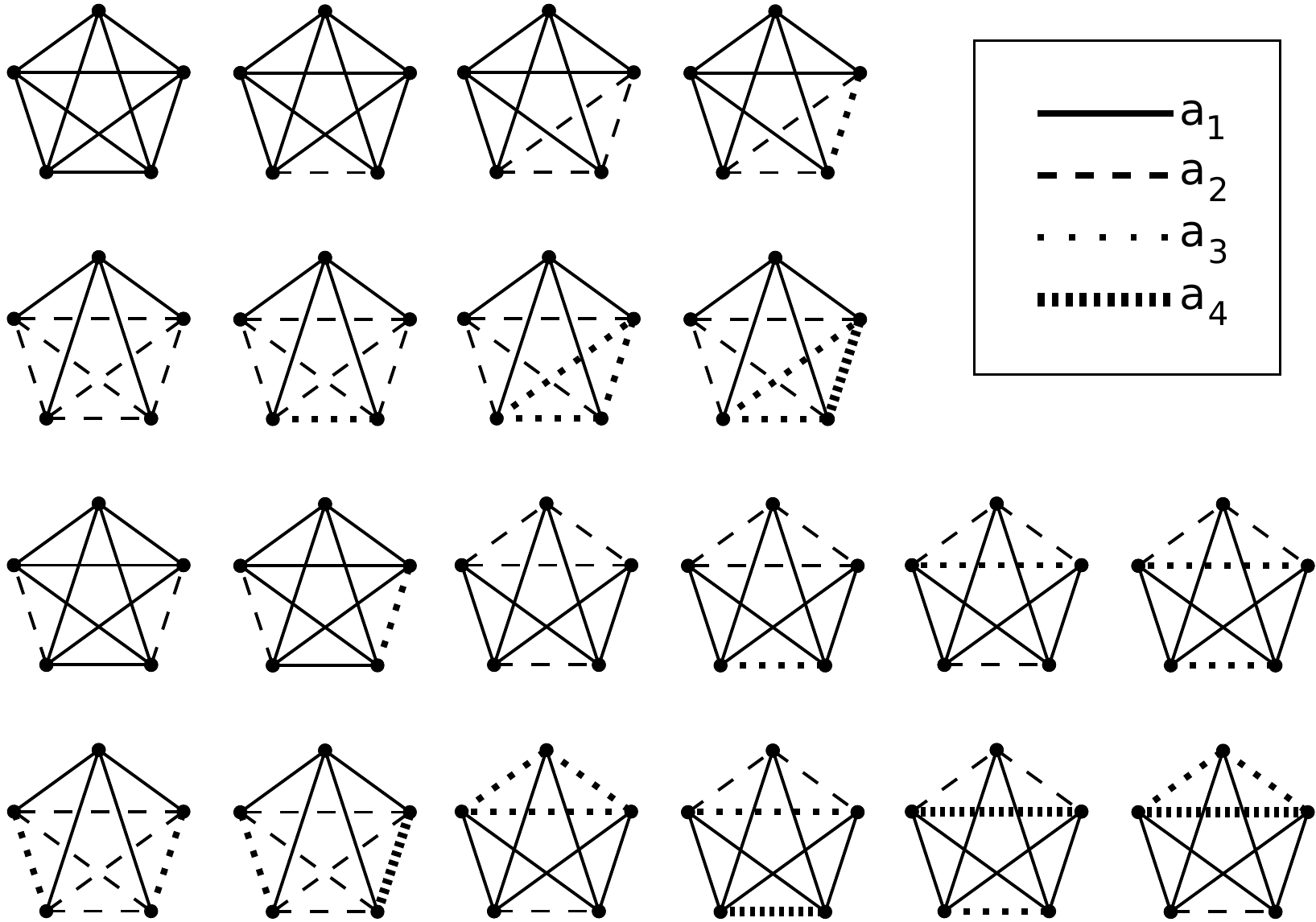}
\caption{The twenty $5 \times 5$ valuation graph classes.}
\end{center}
\end{figure}

In addition to this, it would be of interest to extend our work to include matrices with Jordan Canonical Forms (JCFs) over $\mathbb{Z}_{p^k}$; that is matrices similar to a block diagonal matrix comprised of the Jordan matrices
$$\begin{pmatrix}
\lambda &  1  &  0  & \dots &  0 & 0\\
0 & \lambda &  1  &  \dots  &  0 & 0\\
0 & 0 & \lambda & \dots  &  0 & 0\\
\vdots & \vdots &  \vdots & \ddots  &  \vdots  &  \vdots\\
0 & 0 &  0 & \dots & \lambda & 1\\
0 & 0 & 0 &  \dots  &  0 &  \lambda \end{pmatrix}$$

\noindent for some $\lambda \in \mathbb{Z}_{p^k}$. One would have to be careful performing such an enumeration, because it is possible for a given matrix to have more than one distinct JCF over $\mathbb{Z}_{p^k}$. For instance in $\mathbb{Z}_4$, we have that
$$\begin{pmatrix} 0 &  1 \\ 0 &  0 \end{pmatrix} = 
\begin{pmatrix} 1 &  0  \\ 2 &  1 \end{pmatrix}
\begin{pmatrix} 2 &  1  \\ 0 &  2 \end{pmatrix}
\begin{pmatrix} 1 &  0  \\ 2 &  1 \end{pmatrix}^{-1}.$$

\noindent Although finding an enumeration formula for the centralizer of a Jordan matrix should be straightforward, this is not expected to be the case for an arbitrarily chosen JCF. 

\vspace{.1 in}

As a final remark, besides the potential non-uniqueness of a JCF, there is a reason why we have not enumerated $|\text{Eig}_n(\mathbb{Z}_{p^k})|$. Unlike in the finite field case where any matrix in $\text{Eig}_n(\mathbb{F}_q)$ has a JCF (see \cite{Kaylor} for more details), this is not even necessarily the case in $\text{Eig}_n(\mathbb{Z}_{p^k})$. For example in $\mathbb{Z}_{p^2}$, any matrix of the form $\begin{pmatrix}  \lambda & p \\  0 & \lambda
\end{pmatrix}$ has double eigenvalue $\lambda$, but lacks a Jordan Canonical Form over $\mathbb{Z}_{p^2}$. Determining all similarity classes of matrices such matrices is in general still an open question.

\end{document}